\documentclass[12pt]{amsart}
\usepackage[utf8]{inputenc}
\usepackage[lite, alphabetic, nobysame]{amsrefs}
\usepackage{amsmath,mathtools}
\usepackage{amssymb}
\usepackage{srcltx}
\usepackage{xcolor}
\usepackage{tikz}
\usepackage{xspace}
\usepackage{enumerate}
\usepackage{geometry}
\usepackage{marginnote}

\newcommand{\set}[2]{ \left\{ #1 :\, #2 \right\} }
\newcommand{\seqq}[2]{ \left\langle #1 :\, #2\right\rangle }

\newcommand{\Fraisse}{Fra\"iss\'e\xspace}

\textwidth15cm
\textheight21cm
\evensidemargin.2cm
\oddsidemargin.2cm

\addtolength{\headheight}{5.2pt}    

\newtheorem{theorem}{Theorem}
\newtheorem{lemma}[theorem]{Lemma}
\newtheorem{corollary}[theorem]{Corollary}
\newtheorem{proposition}[theorem]{Proposition}
\newtheorem*{claim}{Claim}

\theoremstyle{definition}
\newtheorem{definition}[theorem]{Definition}
\newtheorem{remark}[theorem]{Remark}
\newtheorem{question}[theorem]{Question}
\newtheorem{fact}[theorem]{Fact}
\newtheorem{example}[theorem]{Example}


\title[Strong ergodicity  for  shifts of bounded algebraic dimension ]{
Strong  ergodicity phenomena for Bernoulli shifts of bounded algebraic dimension 
}

\author{Aristotelis Panagiotopoulos}
\address{University of Vienna, Institute for Mathematics, Kolingasse 14-16, 1090 Vienna, Austria}
\email{aristotelis.panagiotopoulos@gmail.com}

\keywords{permutation groups, Borel reductions, strong ergodicity, Bernoulli shift, algebraic dimension, pinned cardinality, symmetric models, basic Cohen model}

\subjclass[2000]{03E15, 54H05, 03E40, 03E75, 20B07, 37B05}

\author{Assaf Shani}
\address{Department of Mathematics and Statistics,
Concordia University, 1455 De Maisonneuve Blvd. W.
Montreal, QC  H3G 1M8
CANADA}
\email{assaf.shani@concordia.ca}

\begin{document}

\begin{abstract}
The algebraic dimension of a Polish permutation group $Q\leq \mathrm{Sym}(\mathbb{N})$ is the size of the largest $A\subseteq \mathbb{N}$ with the property that  the orbit of every $a\in A$ under the  pointwise stabilizer of $A\setminus\{a\}$ is infinite. 
We study the Bernoulli shift $P\curvearrowright \mathbb{R}^{\mathbb{N}}$ for various Polish permutation groups $P$ and we provide criteria under which the $P$-shift is generically ergodic relative to the injective part of the $Q$-shift, when $Q$ has algebraic dimension $\leq n$.
We use this to show that the sequence of pairwise  $*$-reduction-incomparable  equivalence relations defined in \cite{KP} is  a strictly increasing sequence in the  Borel reduction hierarchy. We also use our main theorem to exhibit an equivalence relation of pinned cardinal $\aleph_1^{+}$ which strongly resembles the equivalence relation of pinned cardinal $\aleph_1^{+}$ from \cite{Zap11}, but which does not Borel reduce to the latter. It remains open whether they are actually incomparable under Borel reductions.

Our proofs rely on the study of  symmetric models whose symmetries come from the group $Q$.  
We show that when $Q$ is ``locally finite"---e.g. when $Q=\mathrm{Aut}(\mathcal{M})$, where $\mathcal{M}$ is 
the \Fraisse{} limit of a \Fraisse{} class satisfying the disjoint amalgamation property---the corresponding symmetric model admits a theory of supports which is  analogous to that in the basic Cohen model.

\end{abstract}

\maketitle

\section{Introduction}

Let $G$ be a Polish group acting continuously  $G\curvearrowright X$  on a Polish space $X$. A fundamental question which motivates several threads of research, both in  descriptive set theory and in ergodic theory, is how much information  does the associated orbit equivalence relation $E^G_X$ preserve from the original action. 
 In particular, one would like to know  which structural properties of the group $G$ can  produce dynamical phenomena which are strong and distinct enough, so as to have an impact on the ``complexity" of  $E^G_X$; the latter being measured by the  position of $E^G_X$ in the \emph{Borel reduction hierarchy} $\leq_B$, as well as by  its \emph{strong ergodic properties}.

When it comes to Polish permutation groups $P\leq \mathrm{Sym}(\mathbb{N})$, an action which often serves as bridge between structural and dynamical properties of $P$ is the {\bf Bernoulli shift}  $P\curvearrowright \mathbb{R}^{\mathbb{N}}$ of $P$. For example, if $E(P)$ is its orbit equivalence relation, then:
\begin{enumerate}
\item $E(P)$ is \emph{smooth}  if and only if $P$ is compact, see \cite{KMPZ};
\item $E(P)$ is \emph{essentially countable} if and only if $P$ is locally compact, \cite{KMPZ};
\item $E(P)$ is \emph{classifiable by CLI-group actions} if and only if  $P$ is CLI, \cite{KMPZ};
\item $E(P)\lneq_B E(P \wr \Gamma )$ for various countable groups $\Gamma$, \cite{CC20};
\item $E(\mathbb{Z} \wr  \mathbb{Z})$ is not \emph{classifiable by TSI-group actions}, \cite{AP20}.
\item $E(\Gamma \wr  \Gamma)\not\leq_B E(\Delta \wr  \Delta)$ for various countable $\Gamma,\Delta$, \cites{Sha19}.
\end{enumerate}

In fact, the above results still hold if we replace all shifts with their \emph{injective parts}.  By the injective part $E_{\mathrm{inj}}(P)$ of 
$E(P)$ we mean the restriction of $E(P)$ to the $P$-invariant subset $\mathrm{Inj}(\mathbb{N},\mathbb{R})$ of $\mathbb{R}^{\mathbb{N}}$, consisting   of all injective sequences.

In this paper we study how the  \emph{algebraic dimension}  of a Polish permutation group affects the ergodic behavior of its Bernoulli shift.  For every Polish permutation group  $P\leq \mathrm{Sym}(\mathbb{N})$ and every $A\subseteq \mathbb{N}$, we denote the pointwise stabilizer of $A$ by $P_A$. The {\bf algebraic closure} of $A$ with respect to $P$ is the set:
\[ [A]_P:=\{a \in \mathbb{N} \colon \text{the orbit } P_{F}\cdot a \text{ is finite for some finite } F\subseteq A\}.\]
The algebraic dimension $\mathrm{dim}(P)$ of the permutation group $P$ is simply the dimension of  the closure operator $A\mapsto [A]_P$. It corresponds to the size of its largest ``independent" set with respect to the closure operator $A\mapsto [A]_P$. More precisely:

\begin{definition}\label{Def1}
The {\bf algebraic dimension} $\mathrm{dim}(P)$ of  a permutation group $P$  is the largest $n\in\mathbb{N}\cup\{\infty\}$ for which there is $A\subseteq \mathbb{N}$, with  $|A|=n$, so that   $a\not\in [A\setminus \{a\}]_P$ for all $a\in A$. If $n\not=\infty$, we say that $P$ is of {\bf bounded algebraic dimension}.
\end{definition}

When $P:=\mathrm{Aut}(\mathcal{N})$ is the automorphism group of a countable structure $\mathcal{N}$, it is often then case that the dynamical definition of algebraic dimension above coincides with the model theoretic one; see \cite{TZ12}. In this context, algebraic dimension has  extensively been studied when  $A\mapsto [A]_{P}$ forms a pregeometry, i.e., when it additionally satisfies the \emph{exchange principle}. Examples include the case where $\mathcal{N}$ is the $n$-dimensional $\mathbb{Q}$-vector space or the complete $n$-branching tree.  It is easy to see that this additional assumption forces $P=\mathrm{Aut}(\mathcal{N})$  to be locally compact and, in turn, $E(P)$  to be essentially countable \cite{Kec92}. From this point of view,  bounded algebraic dimension can be seen as a partial weakening of  local compactness.

The following theorem shows that the strong ergodic properties of the Bernoulli shift of $P$, as well as the Borel reduction complexity of $E(P)$, are both sensitive to the algebraic dimension of $P$. We say that $P$ is {\bf locally finite} if the associated closure operator $A\mapsto [A]_P$ is locally finite, i.e., if $[A]_P$ is finite for all finite $A$. We say that   $P$ is {\bf $\boldsymbol{n}$-free} if for all finite $F\subseteq \mathbb{N}$ there are $g_0,\ldots,g_{n-1}\in P$, so that for all $i\in\{0,\ldots, n-1\}$, $[g_i\cdot F]_P$ and    $[ \bigcup_{j : j\neq i} (g_j\cdot F)]_{P}$ are disjoint. Notice that ``$n$-freedom" of   $P$ implies that its algebraic dimension is ``everywhere" at least $n$: appropriate translates of any given  $a\in \mathbb{N}$ can be used to provide an ``independent" set  $A:=\{ g_0\cdot a,\ldots,g_{n-1}\cdot a\}$ of size $n$, witnessing that $\mathrm{dim}(P)\geq n$.

\begin{theorem} \label{T:1} Let $P,Q\leq \mathrm{Sym}(\mathbb{N})$ be Polish permutation groups and let $n\in\omega$. If the algebraic dimension of  $Q$ is  at most $n$ and $P$ is locally finite and $(n+1)$-free,  then $E_{\mathrm{inj}}(P)$ is generically $E_{\mathrm{inj}}(Q)$-ergodic. 
As a consequence, we have that: 
\[ E_{\mathrm{inj}}(P)\not\leq_B E_{\mathrm{inj}}(Q).\]
\end{theorem}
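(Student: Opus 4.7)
The plan is to argue by contradiction: suppose $f: \mathbb{R}^{\mathbb{N}} \to \mathrm{Inj}(\mathbb{N}, \mathbb{R})$ is a Baire-measurable homomorphism from $E(P)$ to $E_{\mathrm{inj}}(Q)$ whose image is not comeagerly contained in a single $E_{\mathrm{inj}}(Q)$-class. Standard Baire-category reductions (Kuratowski--Ulam and passage to a comeager $P$-invariant Borel set) let me assume $f$ is continuous.

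I then pass to the forcing/symmetric-model perspective advertised in the abstract. View $\mathbb{R}^{\mathbb{N}}$ as the underlying space of a countable product of Cohen forcings over a countable elementary submodel $M$ containing $f$. Since $P$ is locally finite, the associated $P$-symmetric model admits a genuine theory of supports: for a generic $x = (x_k)_k$ and $y := f(x)$, every real $y(k)$ has a canonical minimal finite support $T(k) \subseteq \mathbb{N}$, meaning the minimal finite set of $x$-indices with $y(k) \in M[(x_n)_{n \in T(k)}]$. This defines a function $T: \mathbb{N} \to \mathcal{P}_{\mathrm{fin}}(\mathbb{N})$.

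For every $g \in P$, the translate $g \cdot x$ is again generic and lies in the $E(P)$-class of $x$, so $f(g \cdot x) = q_g \cdot y$ for some $q_g \in Q$; injectivity of $y$ makes $q_g$ unique, and $\phi: g \mapsto q_g$ is a group homomorphism $P \to Q$. Computing the $x$-support of $f(g \cdot x)(k)$ in two ways --- once as $y(\phi(g)^{-1}(k))$ with support $T(\phi(g)^{-1}(k))$, once by transporting the ``template'' of $y(k)$ along the re-indexing of $x$ by $g$, with support $g^{-1}(T(k))$ --- yields the intertwining
\[
T(\phi(g)(k)) \;=\; g \cdot T(k), \qquad g \in P,\ k \in \mathbb{N}.
\]

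For the contradiction, fix $k_0 \in \mathbb{N}$ with $A := T(k_0) \neq \emptyset$ (otherwise $y \in M$ and $f$ collapses comeagerly into one $E_{\mathrm{inj}}(Q)$-class). Apply $(n+1)$-freeness of $P$ with $A$ to obtain $g_0, \ldots, g_n \in P$ with the $P$-closures $[g_i \cdot A]_P$ pairwise disjoint from those of the remaining unions. Set $k_i^{*} := \phi(g_i)(k_0)$; the intertwining gives $T(k_i^{*}) = g_i \cdot A$, and disjointness forces the $k_i^{*}$ to be distinct. I claim the set $\{k_0^{*}, \ldots, k_n^{*}\}$ violates the algebraic dimension bound on $Q$: for each $i$, any $h \in P_{\bigcup_{j \neq i} g_j \cdot A}$ yields $\phi(h) \in Q$ whose action, read through the intertwining, setwise fixes $T(k_j^{*}) = g_j \cdot A$ for each $j \neq i$, while sending $k_i^{*}$ to an index with $T$-value $h \cdot g_i \cdot A$; by $P$-closure disjointness of $g_i \cdot A$ from $\bigcup_{j \neq i} g_j \cdot A$, these $P$-translates take infinitely many values, producing infinitely many distinct elements of $Q_{\{k_j^{*} : j \neq i\}} \cdot k_i^{*}$, in contradiction with $k_i^{*} \in [\{k_j^{*} : j \neq i\}]_Q$.

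The principal obstacle is the last step: translating the infinitely many distinct $P$-translates of the support set $g_i \cdot A$ into infinitely many distinct $Q$-translates of the index $k_i^{*}$ through the possibly non-injective map $T$. Handling this requires the fine theory of supports for the $P$-symmetric model announced in the abstract, which in particular uses local finiteness of $P$ to bound the fibers of $T$ via the algebraic closures $[g_j \cdot A]_P$. Once this bookkeeping is set up, the argument closes as sketched.
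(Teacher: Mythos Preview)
Your overall plan---pass to the symmetric model, assign each $y(k)$ a minimal finite support, use $(n+1)$-freeness to produce $n+1$ items with disjoint supports, and contradict the dimension bound on $Q$---matches the paper's. The gap is in the final step, and it is not the one you flag. You need $\phi(h)\in Q_{\{k_j^{*}:j\neq i\}}$ for $h\in P_{\bigcup_{j\neq i}g_j\cdot A}$, but the intertwining only gives $T(\phi(h)(k_j^{*}))=h\cdot g_j\cdot A=g_j\cdot A=T(k_j^{*})$: $\phi(h)$ permutes the $T$-fiber over $g_j\cdot A$, it need not fix $k_j^{*}$. So the infinitely many $\phi(h)(k_i^{*})$ you produce are not known to lie in the orbit $Q_{\{k_j^{*}:j\neq i\}}\cdot k_i^{*}$, and no contradiction follows. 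Your proposed remedy---bound the fibers of $T$ via local finiteness of $P$---cannot work either: a fixed finite $S$ supports infinitely many reals in $M[(x_n)_{n\in S}]$, and local finiteness only bounds $|[S]_P|$, not $|T^{-1}(S)|$.

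The paper sidesteps $\phi$ and the indices entirely and works with the reals $b_i:=y(k_i^{*})$ inside the $Q$-structure $\mathcal{B}$ carried by $\mathrm{ran}(y)$, which is definable in $V(\mathcal{A})$ from the $P$-invariant $\mathcal{A}$ alone. The dimension hypothesis, transported to $\mathcal{B}$ via the Scott sentence of $Q$, says some $b_i$ lies in a finite $\mathcal{B}$-definable set over $\{b_j:j\neq i\}$; this is a \emph{definability} statement about the real $b_i$, making $\bigcup_{j\neq i}\mathrm{supp}(b_j)$ a definable support for $b_i$. The support lemma (this is where local finiteness of $P$ is actually used) then yields $b_i\in V\bigl([\bigcup_{j\neq i}\mathrm{supp}(b_j)]_{\mathrm{Aut}(\mathcal{A})}\bigr)$, and mutual genericity together with the $(n+1)$-freeness disjointness forces $b_i\in V$, contradicting $\mathrm{supp}(b_i)\neq\emptyset$. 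The key move is to read ``$b_i$ algebraic over the others'' as a support containment, rather than to try to realize it by explicit elements of $Q$.
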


While it is tempting  to conjecture that one should  be able to  replace  $E_{\mathrm{inj}}(Q)$ with the full shift $E(Q)$ above, one of three ``red herrings" from \cite{BFKL15} suggests that  Theorem \ref{T:1} could actually be sharp; see Remark \ref{R:F} below.

Baldwin, Koerwien and Laskowski defined  for each $n\geq 2$ a countable, locally finite, ultrahomogeneous structure $\mathcal{N}^{\mathrm{BKL}}_n$ whose $\mathcal{L}_{\omega_1\omega}$-theory has models of size $\aleph_{n-1}$ but not of size $\aleph_{n}$; see \cite{BKL17}.
The construction of the $\aleph_{n-1}$-sized model was based on the fact that the category $\mathrm{Age}(\mathcal{N}^{\mathrm{BKL}}_n)$, of all finite substructures of $\mathcal{N}^{\mathrm{BKL}}_n$, has \emph{$m$-disjoint amalgamation} for all $m\leq n$, and the lack of an $\aleph_{n}$-sized models was based on the fact that the algebraic dimension of $\mathcal{N}^{\mathrm{BKL}}_n$ is $n$.

Kruckman and Panagiotopoulos~\cites{KP} studied \emph{$*$-reductions}, that is, reductions which preserve comeager sets. It follows from their results that the collection:
\[E_{\mathrm{inj}}(\mathrm{Aut}(\mathcal{N}^{\mathrm{BKL}}_2)), E_{\mathrm{inj}}(\mathrm{Aut}(\mathcal{N}^{\mathrm{BKL}}_3)),\ldots, E_{\mathrm{inj}}(\mathrm{Aut}(\mathcal{N}^{\mathrm{BKL}}_n)), \ldots, E_{\mathrm{inj}}(\mathrm{Sym}(\mathbb{N}))\]
consists of pairwise incomparable equivalence relations under $*$-reductions. However, the question of how they compare with respect to $\leq_B$ remained  open. The fact that $\mathrm{Age}(\mathcal{N}^{\mathrm{BKL}}_{n+1})$  satisfies  \emph{disjoint $m$-amalgamation} for all $m\leq n+1$ can be used to show that  $\mathrm{Aut}(\mathcal{N}^{\mathrm{BKL}}_{n+1})$ satisfies the assumption for $P$ in Theorem \ref{T:1}. Hence we have:

\begin{corollary} \label{cor:1} Assume that $F=E_{\mathrm{inj}}(\mathrm{Aut}(\mathcal{N}^{\mathrm{BKL}}_m))$ and  either $E=E_{\mathrm{inj}}(\mathrm{Aut}(\mathcal{N}^{\mathrm{BKL}}_n))$ with $n>m$, or $E=E_{\mathrm{inj}}(\mathrm{Sym}(\mathbb{N}))$. Then, $E$ is generically $F$-ergodic. In particular, $E_{\mathrm{inj}}(\mathrm{Aut}(\mathcal{N}^{\mathrm{BKL}}_{n+1}))$ is not Borel reducible to $E_{\mathrm{inj}}(\mathrm{Aut}(\mathcal{N}^{\mathrm{BKL}}_n))$.

\end{corollary}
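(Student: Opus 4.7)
My plan is to deduce the corollary as a direct application of Theorem \ref{T:1} by taking $Q:=\mathrm{Aut}(\mathcal{N}^{\mathrm{BKL}}_m)$ and $P$ equal to either $\mathrm{Aut}(\mathcal{N}^{\mathrm{BKL}}_n)$ for $n>m$, or $\mathrm{Sym}(\mathbb{N})$. The task is therefore to verify the hypotheses of Theorem \ref{T:1} for these choices.

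That $Q$ has algebraic dimension exactly $m$ is part of the BKL construction, since for the ultrahomogeneous structure $\mathcal{N}^{\mathrm{BKL}}_m$ the model-theoretic and dynamical algebraic closures coincide. Local finiteness of $P$ in the sense of Theorem \ref{T:1} holds because $\mathcal{N}^{\mathrm{BKL}}_n$ is locally finite, and is trivial for $\mathrm{Sym}(\mathbb{N})$ (where $[A]_P=A$ for every finite $A$). For the $(m+1)$-freeness of $P=\mathrm{Aut}(\mathcal{N}^{\mathrm{BKL}}_n)$, I would use the disjoint $(m+1)$-amalgamation property of $\mathrm{Age}(\mathcal{N}^{\mathrm{BKL}}_n)$, which is available as long as $m+1\leq n$: given any finite $A\subseteq\mathbb{N}$, applying disjoint $(m+1)$-amalgamation to $m+1$ copies of the substructure generated by $A$ inside the \Fraisse{} limit $\mathcal{N}^{\mathrm{BKL}}_n$ yields automorphisms $g_0,\ldots,g_m\in P$ whose translates $g_i\cdot A$ sit in generic position, from which disjointness of $[g_i\cdot A]_P$ from $[\bigcup_{j\neq i}(g_j\cdot A)]_P$ follows. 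For $\mathrm{Sym}(\mathbb{N})$, $k$-freeness is immediate for all $k$.

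Theorem \ref{T:1} then gives generic $E_{\mathrm{inj}}(Q)$-ergodicity of $E(P)$. To transfer this to $E_{\mathrm{inj}}(P)$, I would note that $\mathrm{Inj}(\mathbb{N},\mathbb{R})$ is a comeager, $P$-invariant subset of $\mathbb{R}^{\mathbb{N}}$, so generic $E_{\mathrm{inj}}(Q)$-ergodicity descends from $E(P)$ to the restriction $E_{\mathrm{inj}}(P)$. This establishes $E_{\mathrm{inj}}(P)\not\leq_B E_{\mathrm{inj}}(Q)$, which is the $\not\geq_B$ half at each step of the claimed strict chain. For the positive $\leq_B$ direction: $E_{\mathrm{inj}}(\mathrm{Aut}(\mathcal{N}^{\mathrm{BKL}}_m))\leq_B E_{\mathrm{inj}}(\mathrm{Sym}(\mathbb{N}))$ is standard, while $E_{\mathrm{inj}}(\mathrm{Aut}(\mathcal{N}^{\mathrm{BKL}}_m))\leq_B E_{\mathrm{inj}}(\mathrm{Aut}(\mathcal{N}^{\mathrm{BKL}}_n))$ for $m<n$ comes from an explicit Borel reduction encoding an $\mathcal{N}^{\mathrm{BKL}}_m$-structure inside an $\mathcal{N}^{\mathrm{BKL}}_n$-structure on an invariantly definable portion of the universe.

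The principal obstacle I expect is the translation of disjoint $(m+1)$-amalgamation in $\mathrm{Age}(\mathcal{N}^{\mathrm{BKL}}_n)$ into the exact disjointness-of-algebraic-closures condition required by $(m+1)$-freeness: the distinction between disjointness of the translates $g_i\cdot A$ themselves and disjointness of their algebraic closures $[g_i\cdot A]_P$ must be handled carefully, and is precisely what the ``disjoint'' strengthening of amalgamation together with local finiteness of $\mathcal{N}^{\mathrm{BKL}}_n$ delivers (the algebraic closure of each amalgamand stays inside that amalgamand, so disjointness is inherited).
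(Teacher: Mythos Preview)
Your proposal is correct and follows essentially the same route as the paper: verify the hypotheses of Theorem~\ref{T:1} via the properties recorded in Example~\ref{Ex:2} (local finiteness, algebraic dimension, and $n$-freeness from disjoint amalgamation), apply the theorem, and then supply the positive reductions. The paper's Section~4 makes exactly these moves, giving the explicit ``ignore the last coordinate'' reduction $E(\mathrm{Aut}(\mathcal{N}^{\mathrm{BKL}}_n))\leq_B E(\mathrm{Aut}(\mathcal{N}^{\mathrm{BKL}}_{n+1}))$ that you allude to.
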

We do not know if $E_{\mathrm{inj}}(\mathrm{Aut}(\mathcal{N}^{\mathrm{BKL}}_n))\leq_B E_{\mathrm{inj}}(\mathrm{Aut}(\mathcal{N}^{\mathrm{BKL}}_{n+1}))$. In Section~\ref{subsec : positive reduction} we show that a different family of equivalence relations, closely related to $E_{\mathrm{inj}}(\mathrm{Aut}(\mathcal{N}^{\mathrm{BKL}}_n))$, is strictly increasing with respect to Borel reducibility. 
These equivalence relations were also shown  to be pairwise incompatible with respect to $*$-reductions in \cites{KP}.

In contrast to \cite{KP}, which uses a classical argument based on the amalgamation properties of $\mathcal{N}^{\mathrm{BKL}}_n$, our 
 arguments here employ set-theoretic symmetric model techniques, as developed in \cite{Sha18} and \cite{Sha19},  and they directly utilize the ``algebraicity" properties of $\mathcal{N}^{\mathrm{BKL}}_n$.  
 When it comes to the second statement of Corollary \ref{cor:1} (the Borel irreducibility conclusion), there is yet a third way to approach it. Namely, 
using a set-theoretic pinned cardinality argument, as in \cites{Zap11,LZ}, and the fact that the $\mathcal{L}_{\omega_1,\omega}$-theory of $\mathcal{N}^{\mathrm{BKL}}_n$ has models of size $\aleph_{n-1}$ but not of size $\aleph_{n}$; see Section \ref{sec;pinned}.
However, the analysis we provide here is sharper in a few ways.
First, while pinned cardinality methods suffice for proving Borel irreducibility, Theorem~\ref{T:1} establishes stronger generic ergodicity results. Second, using our methods we are able to distinguish between two equivalence relations which have the same pinned cardinals.
\begin{align*}
\text{Amalgamation properties} \longrightarrow &\quad\quad\quad  \text{\cite{KP}}& \longleftarrow \text{Classical dynamics}\\
\text{Spectrum of }\mathcal{L}_{\omega _{1} \omega }(\mathcal{N}) \longrightarrow & \quad \quad\;\; \text{\cites{Zap11,LZ}}& \longleftarrow  \text{Pinned cardinality}\\
\text{Algebraic dimension} \longrightarrow & \quad\quad\text{Theorem \ref{T:1}}& \longleftarrow \text{Symmetric models}
\end{align*}

Pinned equivalence relations form a bounded complexity class  within the Borel reduction hierarchy, which has a forcing-theoretic definition. The most central example of a non-pinned equivalence relation is $=^+$, which can be identified to $E_{\mathrm{inj}}(\mathrm{Sym}(\mathbb{N}))$. Answering a question of Kechris, Zapletal showed that $=^+$ is not the minimal unpinned Borel equivalence relation  \cite{Zap11}.  Essential in his proof was the notion of pinned cardinality, which is a measure of how far an equivalence relation is from being pinned. In particular, he provided a transfinite sequence of unpinned equivalence relations below  $=^+$ that is linearly ordered under  $\leq_B$.
The first $\omega$ examples in this sequence share many properties with the sequence of the $E_{\mathrm{inj}}(\mathrm{Aut}(\mathcal{N}^{\mathrm{BKL}}_n))$ in Corollary \ref{cor:1} above. 
The minimal example in Zapletal's sequence is defined as follows: 
first, \cites{Zap11} considers a certain Borel graph on $\mathbb{R}$, which has cliques of size $\aleph_1$ but no larger, and then defines $Z\subset \mathbb{R}^{\mathbb{N}}$ as the set of all injective sequences of reals which enumerate a clique in this graph.
The restricted equivalence relation $=^+\restriction Z$ is then unpinned and strictly below $=^+$ in Borel reducibility. The key point distinguishing $=^+\restriction Z$ is that its pinned cardinality is $\aleph_1^+$, while the pinned cardinality of $=^+$ is $\mathfrak{c}^+$.
The equivalence relation  $E_{\mathrm{inj}}(\mathrm{Aut}(\mathcal{N}^{\mathrm{BKL}}_2))$ has pinned cardinality $\aleph_1^{+}$ as well. Hence, arguments based on pinned cardinality cannot separate it from $=^+\restriction Z$.  However, as a consequence of Theorem~\ref{T:1} we prove the following.

\begin{theorem}\label{thm;BKL2-vs-F}
$E_{\mathrm{inj}}(\mathrm{Aut}(\mathcal{N}^{\mathrm{BKL}}_2))$ is generically $=^+\restriction Z$-ergodic and hence: 
\[E_{\mathrm{inj}}(\mathrm{Aut}(\mathcal{N}^{\mathrm{BKL}}_2)){\not\leq_B} =^+\restriction Z\]
\end{theorem}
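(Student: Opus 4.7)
The plan is to adapt the symmetric-model argument behind Theorem~\ref{T:1} to the present setting, using the cardinality bound on cliques in Zapletal's Borel graph $G$ (which yields pinned cardinality $\aleph_1^+$ for $=^+\restriction Z$) in place of the algebraic-dimension bound on a target group. Set $P = \mathrm{Aut}(\mathcal{N}^{\mathrm{BKL}}_2)$; the disjoint $2$-amalgamation of $\mathrm{Age}(\mathcal{N}^{\mathrm{BKL}}_2)$ guarantees that $P$ is locally finite and $2$-free. Via the standard forcing translation of generic ergodicity, it suffices to prove: in the $P$-symmetric Cohen model $N$ over the ground model $V$, for any Borel homomorphism $f : \mathbb{R}^{\mathbb{N}} \to Z$ from $E(P)$ to $=^+\restriction Z$ and the $P$-Cohen generic $x$, the countable set $A_x := \{f(x)(i) : i \in \mathbb{N}\} \subset \mathbb{R}$ lies in $V$. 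Once this is established, genericity forces $A_x$ to be constant on a comeager set, yielding the required generic ergodicity.

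To prove $A_x \in V$, I would invoke the theory of finite supports for $P$-symmetric models developed in this paper (as announced in the abstract, for locally-finite $P$ this theory is analogous to that of the basic Cohen model). Each element $a \in A_x$ then carries a finite support $s_a$ sitting inside the generic copy of $\mathcal{N}^{\mathrm{BKL}}_2$; if $a \notin V$ then $s_a \neq \emptyset$ genuinely involves the generic. Using the disjoint $2$-amalgamation of $\mathrm{Age}(\mathcal{N}^{\mathrm{BKL}}_2)$, equivalently the $2$-freeness of $P$, one produces in an outer forcing extension a family $\{s_\xi : \xi < \aleph_2\}$ of pairwise-disjoint $P$-automorphic copies of $s_a$, and correspondingly a family $\{a_\xi : \xi < \aleph_2\}$ of $P$-automorphic copies of $a$. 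Combining $P$-equivariance of $f$ with absoluteness of the Borel graph $G$, one argues that every pair $(a_\xi, a_{\xi'})$ lies in $G$, producing an $\aleph_2$-clique in $G$ in the outer model. This contradicts the pinned cardinality $\aleph_1^+$ of $=^+\restriction Z$ (equivalently, the $\aleph_1$-bound on cliques in $G$ preserved under the relevant forcings), so $s_a = \emptyset$ and hence $A_x \in V$.

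The main obstacle is this last step: showing that the $P$-automorphic images $\{a_\xi\}$ genuinely form a clique in $G$ in the outer extension. The naive form of $P$-equivariance only yields $A_{g\cdot x} = A_x$ as sets, which is trivial since the shift action does not move individual reals; the content of the argument must come from the non-triviality of $s_a$, together with the disjoint $2$-amalgamation, which should allow us to realise, via generic amalgams of the structure $\mathcal{N}^{\mathrm{BKL}}_2$, new clique-members beyond those of the original $A_x$. This is precisely the extra algebraic input that the dimension~$2$ pregeometry of $\mathcal{N}^{\mathrm{BKL}}_2$ provides and which pure pinned-cardinality methods such as those of \cites{Zap11,LZ} cannot supply; carrying out the support combinatorics through the amalgamation is where the bulk of the technical care will be needed.
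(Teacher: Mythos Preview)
Your proposal has a genuine gap and misses the structural fact the paper actually exploits.

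The paper's proof is short: it observes that Zapletal's graph $\mathcal{G}$ is defined via countably many Borel functions $f_n$ by the clause $x\mathrel{\mathcal{G}}y \iff \exists n\,(f_n(x)=y \vee f_n(y)=x)$. Thus any clique, viewed as a structure in a language with unary function symbols $h_i$ (interpreting the $f_i$), has every point in the definable closure of every other point---algebraic dimension at most~$1$. One then writes down a direct Borel reduction of $=^+\restriction Z$ into the injective labelled logic action $\big(\mathrm{Str}^{\mathrm{inj}}_{\mathbb{R}}(\sigma,\mathbb{N}),\simeq_{\mathrm{iso}}\big)$ for the obvious sentence $\sigma$, and invokes Theorem~\ref{thm: main theorem} with $n=1$, using that $\mathrm{Aut}(\mathcal{N}^{\mathrm{BKL}}_2)$ is locally finite and $2$-free. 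No new symmetric-model work is required.

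Your route instead tries to contradict the $\aleph_1$-bound on cliques by manufacturing $\aleph_2$ many elements $a_\xi\in B$ with pairwise disjoint nonempty supports $s_\xi$. This cannot succeed as written. The obstacle you flag (that the $a_\xi$ form a clique) is not the real one: since $B$ is the domain of a potential $=^+\restriction Z$-invariant it \emph{is} a clique in $\mathcal{G}$, and every $a_\xi$ produced by the mechanism of Theorem~\ref{thm: main theorem} lies in $B$. The actual obstruction is on the source side: the $s_\xi$ are pairwise disjoint finite subsets of the countable domain $A=\mathrm{dom}(\mathcal{A})$, so only countably many exist; passing to an outer extension does not help, because for $\mathcal{B}$ to remain fixed the copies must arise from automorphisms of the fixed countable structure $\mathcal{M}$, and $2$-freeness yields two disjoint copies, not $\aleph_2$. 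More conceptually, the paper explicitly notes that $E(\mathrm{Aut}(\mathcal{N}^{\mathrm{BKL}}_2))$ and $=^+\restriction Z$ share the same pinned cardinal $\aleph_1^+$, so an argument pivoting solely on the clique-size bound cannot separate them. The missing ingredient is precisely the functional definability of $\mathcal{G}$, which places $=^+\restriction Z$ below an algebraic-dimension-$1$ classification problem and lets Theorem~\ref{thm: main theorem} apply directly.
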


\begin{question}
Is $=^+\restriction Z$ Borel reducible to $E_{\mathrm{inj}}(\mathrm{Aut}(\mathcal{N}^{\mathrm{BKL}}_2))$? If not, do they form a minimal basis for non-pinned equivalence relations?
\end{question}

\begin{question}\label{Q:6}
Can we replace  $E_{\mathrm{inj}}(Q)$ with the full shift $E(Q)$ in Theorem~\ref{T:1}?
\end{question}

To appreciate the subtleties in addressing Question \ref{Q:6}, notice that, despite its bounded algebraic dimension, $Q$ can potentially admit as complex actions as $\mathrm{Sym}(\mathbb{N})$:

\begin{remark}\label{R:F}
Let $Q:=\mathrm{Aut}(\mathcal{N}^{\mathrm{BKL}}_n)$, for some $n\geq 2$. While by Corollary \ref{cor:1} we have that $E_{\mathrm{inj}}(\mathrm{Sym}(\mathbb{N}))\not\leq_B E_{\mathrm{inj}}(Q)$, it turns out that any orbit equivalence relation that is induced by an action of $\mathrm{Sym}(\mathbb{N})$ can be Borel reduced to some action of $Q$. This follows from a classical result of Mackey \cite{Gao09}*{Theorem 3.5.2} and the fact that $Q$ contains a closed subgroup which admits a continuous homomorphism onto  $\mathrm{Sym}(\mathbb{N})$. The latter constitutes, essentially, one of the three ``red herrings" from \cites{BFKL15}.  Indeed,  $\mathcal{N}^{\mathrm{BKL}}_n$ is the \Fraisse{} limit of a \Fraisse{} class which satisfies the disjoint amalgamation property; see Example \ref{Ex:2}. Hence, by Theorem 1.10.(1) in \cite{BFKL15}, the domain $\mathbb{N}$ of the structure $\mathcal{N}^{\mathrm{BKL}}_n$ can be endowed with an equivalence relation $E$ which partitions it into countably many disjoint pieces so that: every permutation in  $\mathrm{Sym}(\mathbb{N}/E)$ lifts to some element of the subgroup $Q_E$ of  $Q$,  consisting of all $E$-preserving elements of $Q$, i.e., $g\in Q_E$ if and only if $a E b \iff ga E gb$. 
\end{remark}

\subsection*{Acknowledgments} We would like to thank  A. Kruckman and S. Allison  for  numerous enlightening and inspiring conversations.

\section{Definitions and preliminaries}\label{S:Prelim}

\subsection{Borel reductions and strong ergodicity}

Let $E$ and $F$ be analytic equivalence relation on the Polish spaces $X$ and $Y$, respectively. A  {\bf Borel reduction} from $E$ to $F$ is any Borel function $f\colon X\to Y$, with $x \mathrel{E} x' \iff f(x) \mathrel{F} f(x')$, for all $x,x'\in X$.  If there exists a Borel reduction from $E$ to $F$, we say that $E$ is {\bf Borel reducible} to $F$ and we denote this by $\leq_B$. We say that $E$ is {\bf smooth} if $E\leq_B \; =_{\mathbb{R}}$, where $=_{\mathbb{R}}$ is equality on $\mathbb{R}$.  We say that $E$ is {\bf essentially countable} if $E\leq_B F$, where $F$ is a Borel equivalence relation with countable $F$-classes.  We say that $E$ is {\bf classifiable by countable structures} if $E\leq_B \; \simeq_{\mathrm{iso}}$, where $\simeq_{\mathrm{iso}}$ is the isomorphism equivalence relation on the Polish space of all $\mathcal{L}$-structures on $\mathbb{N}$ for some first order language $\mathcal{L}$; see \cite{Kan08}*{12.3}, \cite{Gao09}, \cite{Hjo00}.

Often the non-existence of a Borel reduction from $E$ to $F$ is a consequence of a strong ergodicity phenomenon. 
A {\bf Baire-measurable homomorphism} from $E$ to $F$ is any Baire measurable function $f\colon X\to Y$, with $x E x' \implies f(x) F f(x')$, for all $x,x'\in X$. We say that  \textbf{$E$ is generically $F$-ergodic} if any Baire measurable homomorphism from $E$ to $F$ maps a comeager subset of $X$ into a single $F$-class. For example, $E$ is generically $=_\mathbb{R}$-ergodic if and only if \textbf{$E$ is generically ergodic}, that is, every $E$-invariant subset of $X$ is either meager or comeager.
Notice that if every  $E$-class is meager in $X$ and $E$ is generically $F$-ergodic, then $E\not\leq_B F$. All the examples considered in this paper have meager equivalence classes; see Section~\ref{SS:BS}.

If $G\curvearrowright X$ is a continuous action of a Polish group on a Polish space $X$, then the associated {\bf orbit equivalence relation} is the relation $E^G_X$ on $X$ defined by: 
\[x \mathrel{E^G_X} x' \iff \exists g \in G \; (gx=x').\]   

\subsection{Bernoulli shifts} \label{SS:BS}

Let $\mathrm{Sym}(\mathbb{N})$ be the Polish group of all permutations $g\colon \mathbb{N}\to \mathbb{N}$ of $\mathbb{N}$ endowed with the pointwise convergence topology. A {\bf Polish permutation group} $P$ is any closed subgroup of  $\mathrm{Sym}(\mathbb{N})$. The action of $P$ on $\mathbb{N}$, by $(g,a)\mapsto  g(a),$ induces an action of $P$ on the space $\mathbb{R}^{\mathbb{N}}$, of all maps $x\colon \mathbb{N}\to \mathbb{R}$,  by $(g,x)\mapsto g x$, where $(g x)(a)=x ( g^{-1}(a))$. Following \cite{KMPZ}, we call $P\curvearrowright \mathbb{R}^{\mathbb{N}}$ the {\bf Bernoulli shift of $P$} and we denote by $E(P)$ the associated orbit equivalence relation on $\mathbb{R}^{\mathbb{N}}$. The {\bf injective part of the Bernoulli shift} is the restriction of the above action to the $P$-invariant subset $\mathrm{Inj}(\mathbb{N},\mathbb{R})$ of $\mathbb{R}^{\mathbb{N}}$ which consists of all injective functions of $\mathbb{R}^{\mathbb{N}}$. Notice that $\mathrm{Inj}(\mathbb{N},\mathbb{R})$ is a dense $G_{\delta}$ subset of $\mathbb{R}^{\mathbb{N}}$. Moreover, it is easy to see that the orbit of any $x\in \mathbb{R}^{\mathbb{N}}$ under the action $P\curvearrowright \mathbb{R}^{\mathbb{N}}$ is meager. This follows  for example from the fact that, for all $k\in\mathbb{N}$, the projection $\pi_k\colon \mathbb{R}^{\mathbb{N}}\to \mathbb{R}$ to the $k$-th coordinate is continuous and open, and hence, $\pi_k^{-1}(r)$ is nowhere dense for all $r\in\mathbb{R}$.

\subsection{Automorphism groups and ultrahomogeneous structures}\label{SS:BS'}

Several connections between the Borel reduction complexity of $E(P)$ and the dynamical properties of $P$ are summarized in the introduction. Often these dynamical properties of $P$ are reflections  of combinatorial and model-theoretic properties of first-order structures. The connection with model-theory comes from the fact that every Polish permutation group $P$ can be expressed as the automorphism group $\mathrm{Aut}(\mathcal{N})$ of some countable structure $\mathcal{N}$. The connection with combinatorics comes from the fact that this countable structure $\mathcal{N}$ can in turn always be assumed to be \emph{ultrahomogeneous} and hence the dynamical properties  $\mathrm{Aut}(\mathcal{N})$ can be reflected to the combinatorial properties of the class $\mathrm{Age}(\mathcal{N})$,  consisting  of all finitely generated structures that embed in $\mathcal{N}$. We briefly review the pertinent definitions and some relevant examples below. For more details one may consult  \cite{Hodges} or \cite{TZ12}.

Let $\mathcal{L}$ be a first order language. We say that  $\mathcal{L}$ is {\bf relational} if it only contains relation symbols. If $\mathcal{M}$ is an $\mathcal{L}$-structure then we write $\mathcal{M}=(M,\ldots)$ to indicate that the domain of  $\mathcal{M}$ is the set $M$. 
Let $\mathcal{N}=(\mathbb{N},\ldots)$ be an $\mathcal{L}$-structure.  For every $A\subseteq \mathbb{N}$ we denote by $\langle A\rangle_{\mathcal{N}}$ the smallest substructure of $\mathcal{N}$  containing every element of $A$ in its domain. We say that $\mathcal{N}$ is {\bf ultrahomogeneous} if for every bijection $f\colon A\to B$ between finite subsets of $\mathbb{N}$ which induces an isomorphism from   $\langle A\rangle_{\mathcal{N}}$  to  $\langle B\rangle_{\mathcal{N}}$,  we have that $f$ extends to an automorphism of $\mathcal{N}$. We denote by $\mathrm{Age}(\mathcal{N})$ the class of all finitely generated $\mathcal{L}$-structures which embed into $\mathcal{N}$. If $\mathcal{N}$ is ultrahomogeneous, then $\mathcal{K}:=\mathrm{Age}(\mathcal{N})$ satisfies the {\bf amalgamation property}, i.e., for every two embeddings  $f\colon \mathcal{A}\to \mathcal{B}$, $g\colon \mathcal{A}\to \mathcal{C}$, with $\mathcal{A},\mathcal{B},\mathcal{C}\in \mathcal{K}$, there exists $\mathcal{D}\in \mathcal{K}$ and embeddings  $f'\colon \mathcal{B}\to \mathcal{D}$, $g'\colon \mathcal{C}\to \mathcal{D}$ so that $g'\circ g= f'\circ f$.
When $\mathcal{N}$ is ultrahomogeneous, then various strengthenings of the amalgamation property, as well as several other combinatorial properties of $\mathcal{K}$, often correspond to dynamical properties of $P:=\mathrm{Aut}(\mathcal{N})$. We recall here some  amalgamation properties of $\mathcal{K}$ which relate to the dynamical properties of $P$ mentioned in the introduction such as $P$  being locally finite or $n$-free.

Let $\mathcal{N}=(\mathbb{N},\ldots)$ be an  ultrahomogeneous $\mathcal{L}$-structure and let  $\mathcal{K}:=\mathrm{Age}(\mathcal{N})$. We say that $\mathcal{K}$ satisfies the {\bf disjoint amalgamation property}---also known as {\bf strong amalgamation property}---if it satisfies the amalgamation property, as stated above, but in addition to $g'\circ g= f'\circ f$, the maps $f,f',g,g'$ have to also satisfy: $\mathrm{range}(f')\cap\mathrm{range}(g')=\mathrm{range}(f'\circ f)$. It is well known that $\mathcal{K}$ has the disjoint amalgamation property if and only if $[A]_P=\mathrm{dom}(\langle A\rangle_{\mathcal{N}})$, for every finite $A\subseteq \mathbb{N}$---this is a classical application of  Neumann's lemma \cite{neumann1976structure}*{Lemma 2.3} see \cite{Hodges}*{Exercise 7.1.8.}. As a consequence, if $\mathcal{L}$ is relational, then the  disjoint amalgamation property for $\mathcal{K}$ implies that $P$ satisfies a well-studied strengthening of local finiteness known as {\bf no algebraicity}: $[A]_P=A$, for all finite $A\subseteq \mathbb{N}$. Similarly, if $\mathcal{K}$ consists only of finite (possibly non-relational) structures, then the  disjoint amalgamation property for $\mathcal{K}$ implies that $P$ is locally finite. In fact, it is not difficult to see that, when $\mathcal{K}$ consists only of finite structures, then $P$ is locally finite if and only if $\mathcal{K}$ has the {\bf cofinal disjoint amalgamation property}, i.e., if there is some $\mathcal{K}_0\subseteq \mathcal{K}$ which has the disjoint amalgamation property so that every structure in  $\mathcal{K}$ embeds to some structure in   $\mathcal{K}_0$;  see \cite{Random}*{Theorem 4.14.}. 

Next we recall the notion of \emph{disjoint $n$-amalgamation}, for $\mathcal{K}=\mathrm{Age}(\mathcal{N})$ as above, which relates to the property of $P=\mathrm{Aut}(\mathcal{N})$ being $n$-free---here we follow \cite{KP} whose notation is closer to the one in this paper. Let $\Delta^{n-1}$ be the poset $(\mathcal{P}(n),\subseteq)$ of all subsets of $n:=\{0,1,\ldots,n-1\}$ ordered by inclusion. We denote by $\partial \Delta^{n-1}$ the subposet of $\Delta^{n-1}$ consisting of all but the top element $n\in \Delta^{n-1}$.
An {\bf $n$-cube in $\mathcal{K}$} is a functor\footnote{As usual we view $\mathcal{K}$ as a category whose morphisms are embeddings of $\mathcal{L}$-structures.} $T$ from $\Delta^{n-1}$ to $\mathcal{K}$, i.e., a pair $T= \big((\mathcal{A}_{\sigma})_{\sigma},(f^{\sigma}_{\tau})_{\sigma\subseteq\tau} \big)$, where $(\mathcal{A}_{\sigma})_{\sigma}$ is a collection of structures from $\mathcal{K}$, indexed by elements $\sigma$ of $\Delta^{n-1}$; together with an embedding $f^{\sigma}_{\tau}\colon  \mathcal{A}_{\sigma}\to \mathcal{A}_{\tau}$, whenever $\sigma\subseteq \tau$, so that each $f^\sigma_\sigma$ is the identity map, and for every $\sigma\subseteq\tau\subseteq\rho$ we have that $f^{\tau}_{\rho} \circ f^{\sigma}_{\tau}=f^{\sigma}_{\rho}$. Similarly, a \textbf{partial $n$-cube} in $\mathcal{K}$ is a functor from $\partial\Delta^{n-1}$ to $\mathcal{K}$. An $n$-cube in $\mathcal{K}$ is \textbf{disjoint} if
\begin{align}\label{Eq:Disjoint}
\mathrm{range}(f^\sigma_{\sigma\cup \tau})\cap \mathrm{range}(f^\tau_{\sigma\cup \tau}) = \mathrm{range}(f^{\sigma\cap \tau}_{\sigma\cup\tau})    
\end{align}
for all $\sigma$ and $\tau$. Similarly, a partial $n$-cube is disjoint if the same condition holds whenever $\sigma\cup \tau\subsetneq n$. The class $\mathcal{K}$ has \textbf{disjoint $n$-amalgamation} if every disjoint partial $n$-cube can be extended to a disjoint $n$-cube. 
Notice that disjoint $2$-amalgamation coincides to disjoint amalgamation as defined in the previous paragraph. Examples of ultrahomogeneous structures $\mathcal{N}_n$ so that $\mathrm{Age}(\mathcal{N}_n)$ has disjoint $m$-amalgamation for all $m\leq n$ but  does not have disjoint $(n+1)$-amalgamation were first constructed in \cite{BKL17}; see Example \ref{Ex:2} below. 

Similarly one defines the \emph{disjoint $n$-joint embedding property}: let $\Delta^{n-1}_{-}$  and $\partial\Delta^{n-1}_{-}$ be the subposets of $\Delta^{n-1}$ and $\partial\Delta^{n-1}$ respectively that contain all elements therein but the empty set $\emptyset\in \mathcal{P}(n)$. We say that $\mathcal{K}$ has the {\bf disjoint $n$-joint embedding property} if every functor $\partial\Delta^{n-1}_{-}\to \mathcal{K}$
satisfying (\ref{Eq:Disjoint}) above for all  $\sigma,\tau \in \partial\Delta^{n-1}_{-}$  extends to a functor $\Delta^{n-1}_{-}\to \mathcal{K}$ which  satisfies (\ref{Eq:Disjoint}) for all $\sigma,\tau \in \Delta^{n-1}_{-}$. Here, we take $\mathrm{range}(f^{\sigma\cap \tau}_{\sigma\cup\tau})$ to simply be the empty set whenever $\sigma\cap\tau=\emptyset$. When the language $\mathcal{L}$ contains no constant symbols, then the empty structure is an $\mathcal{L}$-structure and hence the disjoint $n$-amalgamation property implies the disjoint $n$-joint embedding property. However this is not true in general.

\begin{lemma}\label{L:new}
Let $\mathcal{K}:=\mathrm{Age}(\mathcal{N})$ where $\mathcal{N}=(\mathbb{N},\ldots)$ is an ultrahomogeneous structure. If $\mathcal{K}$ satisfies the disjoint amalgamation property and the  disjoint $k$-joint embedding property for all $k\leq n$, then $P:=\mathrm{Aut}(\mathcal{N})$ is $n$-free.
\end{lemma}
\begin{proof}
Let $F\subseteq \mathbb{N}$ be finite and set $\mathcal{A}:=\langle F \rangle_{\mathcal{N}}\in\mathcal{K}$.
We will first define for each $k\in\{1,\ldots,n\}$ functors  
$T_k= \big((\mathcal{A}_{\sigma})_{\sigma},(f^{\sigma}_{\tau})_{\sigma\subseteq\tau} \big)$ from  $\Delta^{k-1}_{-}$ to $\mathcal{K}$ so that:
\begin{enumerate}
    \item[(i)] $T_{k+1}\upharpoonright \Delta^{k-1}_{-}=T_k$, when $k<n$;
    \item[(ii)] $\mathcal{A}_{\{0\}}=\mathcal{A}$, and $\mathcal{A}_\sigma$ is isomorphic to $\mathcal{A}_{\sigma'}$ whenever $\sigma,\sigma'$ are of the same size;
    \item[(iii)] for all $\sigma,\tau \in \Delta^{k-1}_{-}$ the identity (\ref{Eq:Disjoint}) holds, where $\mathrm{range}(f^{\sigma\cap \tau}_{\sigma\cup\tau})$ is taken to be the empty set whenever $\sigma\cap\tau=\emptyset$.
\end{enumerate}

This is done by induction on $k$. For $k=1$ we simply let $\mathcal{A}_{\{0\}}:=\mathcal{A}$. Assume now that we have defined $T_{k-1}$ by providing the necessary data.  Namely, $\mathcal{A}_{\sigma}$ and $f^{\sigma}_{\tau}$ for all $\sigma,\tau\subseteq \{0,\ldots,k-2\}$ with $\sigma\subseteq \tau$, so that (ii) and (iii) above hold. We may extend this functor to a functor $T^{\partial}_k\colon \partial  \Delta^{k-1}_{-}\to \mathcal{K}$ as follows. If $0\leq \ell_0<\ldots<\ell_i=k-1$  with $i\neq k-1$, then set $\mathcal{A}_{\{\ell_0,\ldots,\ell_i\}}$ to be an isomorphic copy of $\mathcal{A}_{\{0,\ldots,i\}}$ whose domain is disjoint from all the domains of all structures so far involved in the ongoing construction; let $\varphi_{\{\ell_0,\ldots,\ell_i\}}\colon \mathcal{A}_{\{\ell_0,\ldots,\ell_i\}} \to \mathcal{A}_{\{0,\ldots,i\}}$ be the associated isomorphism. If in addition we have  $0\leq m_0<\ldots<m_j\leq k-1$ with $j\neq k-1$, then set 
\[f_{\{\ell_0,\ldots,\ell_i\}}^{\{m_0,\ldots,m_j\}}:=\varphi^{-1}_{\{\ell_0,\ldots,\ell_i\}}\circ f_{\{0,\ldots,i\}}^{\{0,\ldots,j\}} \circ \varphi_{\{m_0,\ldots,m_j\}},\] 
where $\varphi_{\{m_0,\ldots,m_j\}}=\mathrm{id}$ if $m_j<k-1$.
It is easy to see that this induces a functor $T^{\partial}_k\colon \partial  \Delta^{k-1}_{-}\to \mathcal{K}$ so that (i),(ii),(iii) hold when $\sigma,\sigma',\tau$ range in $\partial  \Delta^{k-1}_{-}$. Since $\mathcal{K}$ has the $(k-1)$-joint embedding property, the functor $T^{\partial}_k$ extends to a functor $T_k\colon   \Delta^{k-1}_{-}\to \mathcal{K}$  so that (i),(ii),(iii) hold.

To conclude with the proof, let $\mathcal{B}:=\mathcal{A}_{\{0,\ldots,n-1\}}\in\mathcal{K}$ be the top element of the image of $T_{n}$ and for every $i<n$ set $h_i:=f^{\{i\}}_{\{0,\ldots,n-1\}}$. By (ii),(iii) above, for each $i<n$, the map $h_{i}$ is an embedding from $\mathcal{A}$ to $\mathcal{B}$ so that
\begin{align}\label{Eq:Disjoint2}
\mathrm{range}(h_i)\cap \mathrm{dom}(\langle \bigcup_{j \colon j\neq i}\mathrm{range}(h_j)\rangle_{\mathcal{B}})=\emptyset. 
\end{align}
Since $\mathcal{B}\in\mathcal{K}$, we may assume without loss of generality that $\mathcal{B}$ is a substructure of $\mathcal{N}$. Since $\mathcal{N}$ is ultrahomogeneous and $\mathcal{A}$ is generated by the finite $F$,  we can extend $h_0,\ldots,h_{n-1}$ into automorphisms $g_0,\ldots,g_{n-1}\in P$. Notice that, by (\ref{Eq:Disjoint2}) above, for all $i\in\{0,\ldots, n-1\}$ we have that $\mathrm{dom}(\langle g_i\cdot F\rangle_{\mathcal{N}})$ and    $\mathrm{dom}(\langle \bigcup_{j : j\neq i} (g_j\cdot F)\rangle_{\mathcal{N}})$ are disjoint. Since $\mathcal{K}$ has the disjoint amalgamation property, the latter is equivalent to $[g_i\cdot F]_P$ and    $[\bigcup_{j : j\neq i} (g_j\cdot F)]_{P}$ being disjoint, concluding the proof that  $P$ is $n$-free.
\end{proof}

\subsection{The model-theoretic algebraic closure}

If $\mathcal{N}$ is an $\mathcal{L}$-structure, then the {\bf $\mathcal{L}$-algebraic closure} $\mathrm{acl}_{\mathcal{N}}(A)$ of $A\subseteq\mathrm{dom}(\mathcal{N})$  is the set of all $b\in \mathrm{dom}(\mathcal{N})$ for which there is an $\mathcal{L}$-formula $\varphi$, with parameters from $A$, so that $\mathcal{N}\models\varphi(b)$ and $\{c\in \mathrm{dom}(\mathcal{N}) \colon \mathcal{N}\models\varphi(c)\}$ is finite. Similarly to Definition \ref{Def1}, one can use the operator $A\mapsto\mathrm{acl}_{\mathcal{N}}(A)$ to define a notion of dimension for $\mathcal{L}$-structures. Namely:

\begin{definition}\label{def:dim_for_structures}
The {\bf algebraic dimension} $\mathrm{dim}_{\mathcal{L}}(\mathcal{N})$ of an $\mathcal{L}$-structure $\mathcal{N}$ is the largest $n\in\mathbb{N}\cup\{\infty\}$ for which there exists $A\subseteq \mathbb{N}$, with $|A|=n$, so that for all $a\in A$ we have that $a\not\in \mathrm{acl}_{\mathcal{N}}(A\setminus \{a\})$. 
\end{definition}

In general, if $P:=\mathrm{Aut}(\mathcal{N})$, then the algebraic closure $A\mapsto [A]_P$ defined in the introduction does not coincide with its model-theoretic counterpart $A\mapsto\mathrm{acl}_{\mathcal{N}}(A)$. In particular, the algebraic dimensions $\mathrm{dim}_{\mathcal{L}}(\mathcal{N})$ and $\mathrm{dim}(P)$ may not coincide. However, we can always enrich the structure $\mathcal{N}$ and make the two notions equal:

\begin{lemma}\label{PermutationUltrahomogeneous}
If $P$ is a Polish permutation group, then there is exists ultrahomogeneous $\mathcal{L}$-structure $\mathcal{N}=(\mathbb{N},\ldots)$, on a countable relational language $\mathcal{L}$, so that $P=\mathrm{Aut}(\mathcal{N})$, and with $[A]_P=\mathrm{acl}_{\mathcal{N}}(A)$, for every finite $A\subseteq \mathbb{N}$.  
\end{lemma}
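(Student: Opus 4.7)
The plan is to use the \emph{canonical structure} construction: take $\mathcal{L}$ to contain, for every $k\geq 1$ and every orbit $O$ of $P$ on $\mathbb{N}^{k}$, a $k$-ary relation symbol $R_O$, and interpret $R_O$ in $\mathcal{N}$ as $O$ itself. Since each $\mathbb{N}^{k}$ is countable it has countably many $P$-orbits, so $\mathcal{L}$ is a countable relational language. First I would verify the two standard facts: $\mathrm{Aut}(\mathcal{N})=P$ and $\mathcal{N}$ is ultrahomogeneous. For the former, $P\leq\mathrm{Aut}(\mathcal{N})$ is immediate from the definition, while the reverse inclusion uses that $P$ is closed in $\mathrm{Sym}(\mathbb{N})$ and that any $g\in\mathrm{Aut}(\mathcal{N})$ agrees on each finite tuple with an element of $P$ (because each such tuple and its image satisfy the same orbit-relation $R_O$, hence lie in the same $P$-orbit). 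For ultrahomogeneity, given a finite partial isomorphism $f\colon A\to B$, the tuples enumerating $A$ and $B$ satisfy the same $R_O$, so they are in the same $P$-orbit, giving an element of $P=\mathrm{Aut}(\mathcal{N})$ extending $f$.

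The heart of the lemma is the identification $[A]_P=\mathrm{acl}_{\mathcal{N}}(A)$. For $\mathrm{acl}_{\mathcal{N}}(A)\subseteq [A]_P$, which holds for any structure whose automorphism group contains $P$: if $\varphi(y,a_1,\ldots,a_n)$ has only finitely many solutions and $b$ is one of them, then $P_A$ fixes each $a_i$ and every $g\in P_A$ sends solutions to solutions, so $P_A\cdot b$ is contained in this finite set. For the reverse inclusion $[A]_P\subseteq\mathrm{acl}_{\mathcal{N}}(A)$, which is where the canonical choice of $\mathcal{L}$ is used, enumerate $A=\{a_1,\ldots,a_n\}$, suppose $P_A\cdot b$ is finite, and let $O$ be the $P$-orbit of $(a_1,\ldots,a_n,b)$ in $\mathbb{N}^{n+1}$. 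Then the formula $R_O(a_1,\ldots,a_n,y)$ defines exactly the set
\[
\bigl\{y\in\mathbb{N}:(a_1,\ldots,a_n,y)\in O\bigr\}=\bigl\{g(b):g\in P,\ g(a_i)=a_i\text{ for all }i\bigr\}=P_A\cdot b,
\]
which is finite by assumption, witnessing $b\in\mathrm{acl}_{\mathcal{N}}(A)$.

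I don't anticipate a serious obstacle here: the construction and the two standard verifications are routine, and the key observation is simply that in the canonical structure the $P_A$-orbit of $b$ is literally the solution set of a single atomic formula with parameters from $A$, forcing the dynamical and model-theoretic notions of algebraic closure to agree. The only mild care point is making sure $\mathcal{L}$ stays countable, which is automatic since $\bigcup_k\mathbb{N}^k$ is countable.
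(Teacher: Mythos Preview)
Your proposal is correct and follows essentially the same approach as the paper: both build the canonical structure by naming each $P$-orbit on $\mathbb{N}^k$ with a $k$-ary relation symbol, and both deduce $P=\mathrm{Aut}(\mathcal{N})$ from closedness and ultrahomogeneity from orbit equivalence of tuples. The only cosmetic difference is that for the inclusion $[A]_P\subseteq\mathrm{acl}_{\mathcal{N}}(A)$ the paper conjoins the orbit relations of all sub-tuples of $(\bar a,b)$ to isolate its quantifier-free type, whereas you observe (more directly) that the single atomic formula $R_O(\bar a,y)$ for the orbit of the full tuple already defines exactly $P_A\cdot b$; your version is slightly cleaner but the idea is identical.
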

\begin{proof} This follows from the usual ``orbit completion" argument; e.g. see \cite{Hodges}*{Theorem 4.1.4} or \cite{BK96}*{Section 1.5}: for every $n\in 1,2,\ldots$, and for every orbit $O\subseteq \mathbb{N}^n$ in the diagonal action $g\cdot(a_1,\ldots,a_n)=(g\cdot a_1,\ldots,g\cdot a_n)$ of $P$ on $\mathbb{N}^n$, we introduce a $n$-ary relation symbol $R^{n,O}$ and we set $\mathcal{N}\models R^{n,O}(\bar{a})$ if and only if $\bar{a}\in O$. Clearly $\mathcal{N}$ is ultrahomogeneous and  $P=\mathrm{Aut}(\mathcal{N})$ follows from the fact that $P$ a closed subgroup of $\mathrm{Sym}(\mathbb{N})$.  It is also clear that    $[A]_P=\mathrm{acl}_{\mathcal{N}}(A)$ holds for all finite $A\subseteq \mathbb{N}$. Indeed, if $\bar{a}\in\mathbb{N}^n$  and $b\in\mathbb{N}$, then  the orbit of $b$ under $P_{\bar{a}}$  is finite if and only if $\{c\in\mathbb{N}\colon \mathcal{N}\models\varphi(\bar{a},c) \}$ is finite,  where $\varphi(\bar{x},y)$ is the formula  $R^{n+1,O}(\bar{x},y)$ associated to    the orbit $O$ of $\bar{a}b\in \mathbb{N}^{n+1}$ under $P$.
\end{proof}

\subsection{Examples} The following examples will be important in what follows.

\begin{example}\label{Ex:1}
If $P=\mathrm{Sym}(\mathbb{N})$ is the full symmetric group, then $E_{\mathrm{inj}}(\mathrm{Sym}(\mathbb{N}))$ is Borel bireducible  with the equivalence relation $=^{+}$ of {\bf countable sets of reals}:   $(x_n) =^{+} (y_n)$ if and only if $\{x_n:n\in\mathbb{N}\}=\{y_n: n\in \mathbb{N}\}$, for all $(x_n), (y_n)\in \mathbb{R}^{\mathbb{N}}$. 
\end{example}

\begin{example}\label{Ex:2}
Fix $n\geq 2$ and consider the language $\mathcal{L}_n$ which contains countably many $n$-ary relation symbols $R_0,R_1,R_2,\ldots$  and countably many $n$-ary function symbols $f_0,f_1,f_2,\ldots$. Consider the collection of all countable structures $\mathcal{M}$ which satisfy the following properties:
\begin{enumerate}
    \item $\mathcal{M}$ is locally finite, i.e., $\mathrm{dom}(\langle A\rangle_{\mathcal{M}})$ is finite if so is $A\subseteq \mathrm{dom}(\mathcal{M})$;
    \item the collection $\{R^{\mathcal{M}}_j\colon j \in\mathbb{N} \}$ partitions $\mathrm{dom}(\mathcal{M})^n$;
    \item if $R^{\mathcal{M}}_j(a_1,\cdots, a_n)$, then for all $i>j$ we have $f^{\mathcal{M}}_i(a_1,\cdots, a_n)=a_1$;
    \item if $A\subseteq \mathrm{dom}(\mathcal{M})$ is of size $n+1$, then there is $a\in A$ with $a\in \mathrm{dom}(\langle A\setminus \{a\}\rangle_{\mathcal{M}})$.
\end{enumerate}
By \cite{BKL17}*{Corollary 3.2},  there is a unique up to isomorphism ultrahomogenous countable $\mathcal{L}_n$-structure $\mathcal{N}$ satisfying (1)--(4) that embeds all finite $\mathcal{L}_n$-structures  satisfying (2)--(4).
From now on, we fix an isomorphic copy of this structure on domain $\mathbb{N}$ and we denote it by $\mathcal{N}^{\mathrm{BKL}}_n$. We let  $P_n:=\mathrm{Aut}(\mathcal{N}^{\mathrm{BKL}}_n)$ and $\mathcal{K}_n:=\mathrm{Age}(\mathcal{N}^{\mathrm{BKL}}_n)$. It turns out that  the class $\mathcal{K}_n$ has the disjoint  $k$-amalgamation for all $k\leq n$---see \cite{BKL17}*{Theorem 3.1}, or  \cite{KP}*{Lemma 12.(1)} for a treatment closer in spirit to the one here. In particular, $\mathcal{K}_n$ has the  
 disjoint amalgamation property and, as $\mathcal{L}_n$ has no constants, the  disjoint $k$-joint embedding property for all $k\leq n$; see Section \ref{SS:BS'}.  It follows by Lemma \ref{L:new} that $P_n$ is $n$-free when $n\geq 2$. As it is always true that 
 \[\mathrm{dom}(\langle A\rangle_{\mathcal{N}^{\mathrm{BKL}}_n}) \subseteq \mathrm{acl}_{\mathcal{N}^{\mathrm{BKL}}_n}(A) \subseteq  [A]_{P_n},\]
 for all finite $A\subseteq \mathbb{N}$,
 the fact that  $\mathcal{K}_n$  has the disjoint amalgamation property forces all three sets above to be equal; see Section \ref{SS:BS'}. Since $\mathcal{K}_n$ consists only of finite structures, this implies that $P_n$ is a locally  finite permutation group. Finally, Property (4) above forces the algebraic dimension of $P_n$ to be at most $n$, and since $P_n$ is $n$-free we have $\mathrm{dim}(P_n)=n$. 
The Bernoulli shifts of $P_n=\mathrm{Aut}(\mathcal{N}^{\mathrm{BKL}}_n)$  induce  the following collection of equivalence relations which, in view of  Theorem \ref{T:1}, will play an important role in what follows:
\[E(\mathrm{Aut}(\mathcal{N}^{\mathrm{BKL}}_2)), E(\mathrm{Aut}(\mathcal{N}^{\mathrm{BKL}}_3)),\ldots, E(\mathrm{Aut}(\mathcal{N}^{\mathrm{BKL}}_n)), \ldots,\]
\end{example}

\subsection{Labelled logic actions}\label{SS:LabelledAction}
Let $\mathcal{L}$ be a countable language and let  $\mathrm{Str}(\mathcal{L},\mathbb{N})$ be the Polish space of  all $\mathcal{L}$-structures on domain $\mathbb{N}$; see \cites{Kec95,Gao09}. The {\bf logic action} is the action $(g,\mathcal{N})\mapsto g\mathcal{N}$ of $\mathrm{Sym}(\mathbb{N})$ on $\mathrm{Str}(\mathcal{L},\mathbb{N})$, which is  defined by setting for all relations $R\in\mathcal{L}$  and all functions $f\in\mathcal{L}$:
\begin{align*}
g\mathcal{N}\models R(n_0,\ldots,n_{k-1})\quad &\iff\quad \mathcal{N}\models R(g^{-1}(n_0),\ldots,g^{-1}(n_{k-1})) \\
g\mathcal{N}\models f(n_0,\ldots,n_{k-1})=n\quad &\iff\quad \mathcal{N}\models f(g^{-1}(n_0),\ldots,g^{-1}(n_{k-1}))=g^{-1}(n) 
\end{align*}
If $\sigma$ is an $\mathcal{L}_{\omega_1,\omega}$-sentence then we will denote by $\mathrm{Str}(\sigma,\mathbb{N})$ the $\mathrm{Sym}(\mathbb{N})$-invariant Borel subset of $\mathrm{Str}(\mathcal{L},\mathbb{N})$ which consists of all $\mathcal{N}$ with $\mathcal{N}\models\sigma$. 

As in \cite{KP}, we also consider the {\bf labelled logic action} $\mathrm{Sym}(\mathbb{N})\curvearrowright \mathrm{Str}_{\mathbb{R}}(\mathcal{L},\mathbb{N})$, where 
$\mathrm{Str}_{\mathbb{R}}(\mathcal{L},\mathbb{N}):=\mathbb{R}^{\mathbb{N}}\times\mathrm{Str}(\mathcal{L},\mathbb{N})$ and  $(g,(x,\mathcal{N}))\mapsto (gx,g\mathcal{N})$. The {\bf injective part  of the labelled logic action} is the  restriction of the above action to 
$\mathrm{Sym}(\mathbb{N})$-invariant subset 
 $\mathrm{Str}^{\mathrm{inj}}_{\mathbb{R}}(\mathcal{L},\mathbb{N})$ of $\mathrm{Str}_{\mathbb{R}}(\mathcal{L},\mathbb{N})$, given by
\[\mathrm{Str}^{\mathrm{inj}}_{\mathbb{R}}(\mathcal{L},\mathbb{N}):= \mathrm{Inj}(\mathbb{N},\mathbb{R})\times \mathrm{Str}(\mathcal{L},\mathbb{N}).\]
Notice that $\mathrm{Str}^{\mathrm{inj}}_{\mathbb{R}}(\mathcal{L},\mathbb{N})$ is a dense $G_{\delta}$ subset of $\mathrm{Str}_{\mathbb{R}}(\mathcal{L},\mathbb{N})$.

Similarly we define the  labelled logic action $\mathrm{Sym}(\mathbb{N})\curvearrowright \mathrm{Str}_{\mathbb{R}}(\sigma,\mathbb{N})$ on  models of an $\mathcal{L}_{\omega_1,\omega}$-sentence $\sigma$, together with its injective part $\mathrm{Str}^{\mathrm{inj}}_{\mathbb{R}}(\sigma,\mathbb{N})$.
We will denote by $\simeq_{\mathrm{iso}}$ the associated orbit equivalence relation.
For every $\mathcal{L}$-structure $\mathcal{N}=(\mathbb{N},\ldots)$, the pair $(i,f)$ where  $i\colon \mathrm{Aut}(\mathcal{N})\to \mathrm{Sym}(\mathbb{N})$ and  $f\colon \mathbb{R}^{\mathbb{N}}\to \mathrm{Str}_{\mathbb{R}}(\mathcal{L},\mathbb{N})$, with  $i(g)=g$ and $f(x)=(x,\mathcal{N})$, forms  an $\mathrm{Aut}(\mathcal{N})$-equivariant embedding  of the Bernoulli shift:
\begin{equation*}
\big(\mathrm{Aut}(\mathcal{N})\curvearrowright \mathbb{R}^{\mathbb{N}} \big)\hookrightarrow
\big(\mathrm{Sym}(\mathbb{N})\curvearrowright \mathrm{Str}_{\mathbb{R}}(\mathcal{L},\mathbb{N})\big).
\end{equation*}
That is, for all $g\in \mathrm{Aut}(\mathcal{N})$ and $x\in \mathbb{R}^{\mathbb{N}}$ we have that $i(g)\cdot f(x)=f(g\cdot x)$. As a consequence, $f\colon \mathbb{R}^{\mathbb{N}}\to \mathrm{Str}_{\mathbb{R}}(\mathcal{L},\mathbb{N})$  induces a Borel reduction:
\begin{equation*}
E(\mathrm{Aut}(\mathcal{N}))\leq_B \big(\mathrm{Str}_{\mathbb{R}}(\mathcal{L},\mathbb{N}), \simeq_{\mathrm{iso}}\big).
\end{equation*}
Notice that the above map embeds the injective part of the Bernoulli shift into the injective part of the labelled logic action.

\section{A symmetric model argument}\label{sec: symmetric model}

One of the many early consequences of forcing was the independence of the axiom of choice (AC) from the  Zermelo–Fraenkel (ZF) axioms. To construct a model of ZF+$\neg$AC, Cohen started with the Cohen forcing $\mathbb{P}$ which adds a countable sequence of Cohen reals $\left<x_n\colon n\in \mathbb{N}\right>$, but instead of taking the full forcing extension $V[G]$, he considered the substructure of $V[G]$ generated by all $\mathbb{P}$-names which are invariant under the natural action of $\mathrm{Sym}(\mathbb{N})$ on $\mathbb{P}$ by index permutation; see \cite{Ka08}. The resulting  {\bf basic Cohen model}  coincides with the smallest extension $V(\{x_n\colon n\in \mathbb{N}\})$ of $V$ in $V[G]$ which contains the set $\{x_n\colon n\in \mathbb{N}\}$ and  satisfies ZF. The failure of AC in this model comes from the fact that $V(\{x_n\colon n\in \mathbb{N}\})$  contains no well-ordering of $\{x_n\colon n\in \mathbb{N}\}$. The latter is established by developing a \emph{theory of supports} for definable sets in $V(\{x_n\colon n\in \mathbb{N}\})$. 

Since $\{x_n\colon n\in \mathbb{N}\}$ can be alternatively thought as the classifying $=^{+}$-invariant of the generic point $(x_n)\in \mathbb{R}^{\mathbb{N}}$, one would hope that the ergodic theoretic properties of $=^{+}$ are reflected to the structure of definable sets in  $V(\{x_n\colon n\in \mathbb{N}\})$. This point of view is adopted in  \cites{Sha18,Sha19}, where a general translation principle  is established between strong ergodic properties of  equivalence relations which are classifiable by countable structures and the structure of definable sets in the associated \emph{symmetric models}; see Lemma \ref{lem;generic-erg-symm-model} below. 

In this section we build further on the symmetric model techniques developed in  \cites{Sha18,Sha19} and use the resulting theory to prove Theorem \ref{T:1} and Corollary \ref{cor:1} from the introduction. In the process we
develop a robust theory of supports,  similar to the one in the basic Cohen model, for symmetric models which are associated to locally finite permutation groups; see Lemma \ref{lem;supports-for-V(A)}. 

\subsection{Symmetric models}
For the rest of this section, and through the paper, we work over the universe $V=L$.
This can be done as all the equivalence relations we talk about are in $L$, and the questions of Borel reducibility and strong ergodicity between them are absolute.
This restriction to $L$ is not necessary, and will never in fact be used. For this reason we always use the notation $V$ instead of $L$. We only do this to simplify the introduction below to ordinal definability and minimal extensions, and provide a more concrete context for the reader who is less familiar with the subject.

Let $V[G]$ be a generic extension of $V$ with respect to some forcing notion $\mathbb{P}$  and let $A\in V[G]$. Let $V(A)$ be the smallest transitive substructure $W$ of $V[G]$ such that $W$ satisfies ZF, $W$ extends $V$, and $W$ contains $A$. This is precisely the Hajnal relativized $L$-construction $L(A)$; see \cite{Jec03}*{p. 193}. Note that $L(A)$ depends only on $A$, and not on the poset $\mathbb{P}$ or the ambient universe $V[G]$.\footnote{Without the assumption $V=L$, $V(A)$ can be similarly defined as the union of all models of the form $L(A,v)$ for some $v\in V$, and it depends only on $V$ and $A$.} We generally refer to such models $V(A)$ as \textbf{symmetric models}.

Recall the  notion of ordinal definability  \cite{Jec03}*{p. 194}:
if $W$ is a model of ZF and $A$ is a set in $W$, then  $S$ is \textbf{ordinal definable over $A$} (in $W$), denoted $S\in\mathrm{OD}(A)^W$, if $S\in W$ and there is a formula $\varphi$, ordinal parameters $\bar{b}$ and parameters $\bar{a}$ from the transitive closure of $A$, so that $S=\{s\in W: W\models \varphi(s,A,\bar{a},\bar{b})\}$. Here we will be working with models $W$ of ZF which are generic extensions of $V$, or intermediate extensions of such, and therefore they all have the same ordinals with $V$.

Let $\mathrm{HOD}(A)^{W}$ be the subclass of $W$ consisting of all sets $S\in W$ so that every element of the { transitive closure  $\mathrm{tc}(\{S\})$ of $\{S\}$} is in $\mathrm{OD}(A)^{W}$.
If $W$ is any model of ZF and $A$ a set in $W$, then $\mathrm{HOD}(A)^{W}$ is a transitive model of ZF. See the proof of \cite{Jec03}*{Theorem 13.26} and the following discussion.

\begin{fact}\label{Fact:1}
For every $S\in V(A)$ there is some formula $\varphi$, parameters $\bar{a}$ from the transitive closure of $A$ and $v\in V$ such that $S=\{s\in V(A) : V(A) \models \varphi(s,A,\bar{a},v) \}$.
In this case we say that \textbf{$\bar{a}$ is a definable support for $S$}.
\end{fact}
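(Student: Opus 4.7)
The plan is to exploit the assumption $V=L$, under which $V(A)$ coincides with Hajnal's relativized constructible hierarchy $L(A)$, built by the transfinite recursion $L_0(A):=\mathrm{tc}(\{A\})$, $L_{\alpha+1}(A):=\mathrm{Def}(L_\alpha(A))$, and $L_\lambda(A):=\bigcup_{\alpha<\lambda}L_\alpha(A)$ at limits. The crucial observation is that this entire recursion is carried out in ZF using only the single set parameter $A$, so the assignment $\alpha\mapsto L_\alpha(A)$ is uniformly definable inside $V(A)$ from $A$.

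First, I would argue that $L(A)$ carries a canonical global well-ordering $<_{L(A)}$ that is definable in $V(A)$ from $A$ together with one auxiliary set parameter $v_0\in V$: one starts by fixing (in $V=L$) a well-ordering of $\mathrm{tc}(\{A\})$, which exists and is itself an element of $V$; then one propagates this well-ordering up the hierarchy in the standard Gödelian fashion, ordering each $\mathrm{Def}(L_\alpha(A))$ lexicographically by the pair (Gödel number of defining formula, tuple of parameters already well-ordered). Now let $S\in V(A)$ be arbitrary and let $\alpha_S$ be the ordinal such that $S$ is the $\alpha_S$-th element of $L(A)$ under $<_{L(A)}$. Taking the single parameter $v$ to be a code in $V$ for the pair $(v_0,\alpha_S)$, the set $S$ is then defined by
\[
S=\bigl\{s\in V(A):V(A)\models\varphi(s,A,v)\bigr\},
\]
where $\varphi(s,A,v)$ is the formula ``$s$ is an element of the $\pi_2(v)$-th set of $L(A)$ in the well-ordering built from $A$ and $\pi_1(v)$''. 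No parameter from $\mathrm{tc}(A)$ is actually needed for this route; the statement's flexibility in allowing such an $\bar a$ is just there for convenience later.

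If one prefers to avoid appealing to a global well-ordering, the alternative is a direct induction on the least ordinal $\alpha$ with $S\in L_{\alpha+1}(A)$. At such a stage there is some first-order formula $\psi$ and parameters $b_1,\dots,b_k\in L_\alpha(A)$ with $S=\{s\in L_\alpha(A):L_\alpha(A)\models\psi(s,b_1,\dots,b_k)\}$. Each $b_i$ either lies in $\mathrm{tc}(\{A\})$ (so is already of the allowed form) or, by induction, has its own definable description with parameters of the required shape; substituting these descriptions and using that the hierarchy $L_\beta(A)$ is uniformly definable in $V(A)$ from $A$, one absorbs all the data into a single formula together with parameters from $\mathrm{tc}(A)$ and a single $v\in V$ coding the relevant ordinals and the initial well-ordering.

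There is really no hard step here; the argument is a standard piece of the theory of relativized constructibility, cf.\ \cite{Jec03}*{Chapter 13}. The one point that requires minor care, and which I would flag as the ``main obstacle,'' is verifying the absoluteness needed so that the defining formula works when interpreted in $V(A)$ itself rather than in some $L_\alpha(A)$: this is handled by the $\Sigma_1$-absoluteness of the $L(A)$-construction between $V(A)$ and $V$, together with the fact that quantification over the whole hierarchy reduces, in $V(A)$, to quantification over ordinals and elements of already constructed levels.
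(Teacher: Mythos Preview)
Your first approach contains a genuine error. You claim that a well-ordering of $\mathrm{tc}(\{A\})$ ``exists and is itself an element of $V$,'' but $A$ lies in $V[G]\setminus V$; its transitive closure contains, for instance, the Cohen-generic reals $x_n$, none of which are in $V$, so no well-ordering of $\mathrm{tc}(\{A\})$ can possibly be an element of $V$. Worse, such a well-ordering need not exist in $V(A)$ at all: when $A=\{x_n:n\in\mathbb{N}\}$ is the set of Cohen reals, $V(A)$ is precisely the basic Cohen model, and the entire point of that model is that $A$ cannot be well-ordered there. Hence $L(A)$ does \emph{not} in general carry a definable global well-ordering, and your $<_{L(A)}$ does not exist. (You may be conflating $L(A)$ with $L[A]$; the latter does admit a canonical well-ordering, but it is a different construction.)

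Your alternative inductive argument, on the other hand, is correct and is a standard route to the result. The paper takes a different and slicker path: working inside $V(A)$, one observes that $\mathrm{HOD}(A)^{V(A)}$ is a transitive model of ZF containing $A$ and all the ordinals; by the minimality of $V(A)$ among such models, $V(A)=\mathrm{HOD}(A)^{V(A)}$, and this is exactly the desired conclusion. The paper's argument is top-down (minimality forces every set to be definable) while your induction is bottom-up (building the definition level by level through the $L_\alpha(A)$ hierarchy). Both are valid; the paper's version is a two-line proof that makes the role of minimality transparent, whereas your inductive argument is more explicit about how the parameters $\bar a$ and $v$ actually arise.
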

\begin{proof}
Work inside $V(A)$, and consider $\mathrm{HOD}(A)^{V(A)}$, the class of sets which are hereditarily ordinal definable over $A$, as calculated in $V(A)$.
Then $\mathrm{HOD}(A)^{V(A)}$ is a transitive model of ZF containing $A$ and all the ordinals.
By the minimality of $V(A)$, it follows that $V(A)$ is equal to $\mathrm{HOD}(A)^{V(A)}$. 
So every set in $V(A)$ is definable as in the statement of the fact.\footnote{If we were not to assume that $V=L$, the same arguments hold when $\mathrm{HOD}(A)$ is changed to $\mathrm{HOD}(A,V)$, all the sets hereditarily definable using parameters from the transitive closure of $A$ and arbitrary parameters in $V$.}
\end{proof}

\subsection{Absolute classification}
In what follows, we study the Borel complexity of various analytic equivalence relations $E$, such as the orbit equivalence relations of Bernoulli shifts $E(P)$, by analysing models of the form $V(A)$ where $A$ is a classifying invariant for $E$. This is possible when $E$ admits a ``reasonable enough" classification, as follows.

Let $X\in V$ be a Polish space and  $E\in V$  an analytic equivalence relation on $X$. A {\bf complete classification of $(X,E)$ in $V$} is any assignment $x\mapsto A_x$ in $V$ given by a first-order formula $\varphi(x,A_x)$ of set theory, so that for all $x,y\in X$ we have:
\[x\mathrel{E}y \iff A_x=A_y.\]
We write $x\mapsto_{\varphi} A_x$ when the assignment  $x\mapsto A_x$ is given by $\varphi$. The most ``canonical" such assignment would be to map each $x$ to its $E$-equivalence class $[x]_E$. While this assignment is canonical in that it is defined uniformly for all such pairs $(X,E)$, it does not give us any new information about the classification problem  $(X,E)$ and it does not behave nicely when passing to forcing extensions. For example,  even when $x\in X\cap V$, $[x]_E$ as computed in $V[G]$ may not coincide with $[x]_E$ as computed in $V$, since the forcing may add new members in the $E$-class of $x$. A complete classification
$x\mapsto_{\varphi} A_x$  is an {\bf absolute classification} if whenever $W\supseteq N \supseteq  V$ are models of ZF, the assignment $x\mapsto_{\varphi} A_x$, as computed in both $W$ and $N$ respectively, is still a complete classification for $(X,E)$, and for all $x\in X\cap N$ we have that $A_x$ as computed in $W$ is equal to  $A_x$ as computed in $N$. 


\begin{example}\label{Ex:11}
Recall $E(\mathrm{Sym}(\mathbb{N}))$ from Example \ref{Ex:1}. The assignment 
\begin{align*}
\left<x_n:\,n\in\mathbb{N}\right>&\mapsto\{x_n:\,n\in\mathbb{N}\}    
\end{align*}
defined on $\mathbb{R}^\mathbb{N}$ is a complete classification when restricted to the comeager subset $\mathrm{Inj}(\mathbb{N},\mathbb{R})$ of $\mathbb{R}^{\mathbb{N}}$, consisting of all injective functions from $\mathbb{N}$ to $\mathbb{R}$. 
It is easy to check that it is also an absolute classification. 
\end{example}

More generally, if $(X,E)$ is classifiable by countable structures, then it also admits an absolute classification $x\mapsto A_x$, where $A_x$ is a \emph{hereditarily countable set}. Such absolute classification is attained simply by post-composing the Borel reduction which witnesses classifiability by countable structures with the usual Scott analysis procedure; see \cites{Gao09,Hjo00}. Being classifiable by countable structures is an absolute condition. For the absoluteness of the Scott analysis see also  \cite{Fri00}*{Lemma 2.4}.

The following theorem establishes the main connection between Borel reduction complexity and symmetric models which we are going to use later on. 
Notice that in the Lemma below, if $A_x$ is computed with respect to the assignment of Example \ref{Ex:11}, where $x\in\mathbb{R}^{\mathbb{N}}$ is Cohen generic, then $V(A_x)$  coincides with the basic Cohen model.

\begin{lemma}[\cite{Sha19}]\label{lem;generic-erg-symm-model}
Suppose $E$ and $F$ are Borel equivalence relations on Polish spaces $X$ and $Y$ respectively and $x\mapsto A_x$ and $y\mapsto B_y$ are absolute classifications of $E$ and $F$ respectively. Then,
the following are equivalent.
\begin{enumerate}
    \item For every  Borel homomorphism  $f\colon (X_0,E)\to (Y,F)$, where $X_0\subseteq X$ is non-meager, $f$ maps a non-meager set into a single $F$-class;
    \item If $x\in X$ is Cohen-generic over $V$ and $B$ is a potential $F$-invariant in $V(A_x)$ which is definable only from $A_x$ and parameters in $V$, then $B\in V$.
\end{enumerate}
\end{lemma}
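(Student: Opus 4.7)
The plan is to prove both implications by a forcing-theoretic translation: Cohen-generic reals over $V$ are precisely the ``typical'' points from the viewpoint of Baire category, so a statement about comeager sets in $V$ can be rephrased as one about Cohen-generics, and conversely a forcing-theoretic conclusion transfers back to a comeagerness statement via standard Solovay-type arguments. The absoluteness of the assignments $x\mapsto A_x$ and $y\mapsto B_y$ is what makes the two sides match up.

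For $(2)\Rightarrow(1)$, let $f\colon X_0\to Y$ be a Borel homomorphism with $X_0$ non-meager, and pick a basic open set $[p]$ in which $X_0$ is comeager. Let $x\in[p]$ be Cohen-generic over $V$, so $x\in X_0$ in $V[x]$. Define $B:=B_{f(x)}$, computed in $V[x]$ via the absolute classification of $F$. Since $f$ is a homomorphism, $B$ depends only on the $E$-class of $x$, hence only on $A_x$; concretely, $B$ is the unique value such that every $x'\in X_0$ with $A_{x'}=A_x$ satisfies $B_{f(x')}=B$. This description uses only $A_x$ and the Borel code of $f$ (a $V$-parameter), so $B$ is a potential $F$-invariant in $V(A_x)$ definable from $A_x$ and $V$-parameters. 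Hypothesis (2) then gives $B\in V$. A standard name-computation yields a condition $p'\leq p$ and some $B^{\ast}\in V$ with $p'\Vdash B_{f(\dot{x})}=\check{B}^{\ast}$, and translating back to Baire category, $f$ sends a comeager subset of $[p']\cap X_0$ into the single $F$-class $\{y\in Y:B_y=B^{\ast}\}$.

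For $(1)\Rightarrow(2)$, I argue contrapositively. Assume $x$ is Cohen-generic and $B\in V(A_x)$ is a potential $F$-invariant, defined from $A_x$ by a formula $\varphi(\,\cdot\,,A_x,v)$ with $v\in V$, but $B\notin V$. Applying $\varphi$ to an arbitrary Cohen-generic $x'\in X$ produces an $E$-invariant, Baire-measurable assignment $x'\mapsto B_{x'}$ on a comeager subset of $X$. The ``potential invariant'' hypothesis means that each $B_{x'}$ can be realized as $B_{y_{x'}}$ for some $y_{x'}\in Y$ in a further forcing extension; combining this with the absoluteness and Scott-analytic form of $y\mapsto B_y$, one uniformises to arrange $x'\mapsto y_{x'}$ to be Baire-measurable and $E$-to-$F$-homomorphic. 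Since $B\notin V$, no condition forces $\dot B$ to equal any single ground-model value, so $x'\mapsto B_{x'}$ is non-constant on every non-meager set; by the completeness of the classification, the same is true of $x'\mapsto y_{x'}$ modulo $F$. This contradicts~(1).

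The main obstacle is the realization step in the converse direction: producing, from a potential invariant $B\in V(A_x)$, a $y\in Y$ with $B_y=B$ in a sufficiently uniform way to yield a Baire-measurable homomorphism in the ground model. The witness $y$ is only guaranteed in a larger forcing extension, so one must rely on the precise formulation of ``potential invariant'' from~\cite{Sha19}, which is set up so that the canonical Scott-type structure of classifying invariants for equivalence relations classifiable by countable structures provides a definable realizer. Once this is in place, the two statements become dual phrasings of the same forcing-theoretic content.
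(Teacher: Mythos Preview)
Your sketch of $(2)\Rightarrow(1)$ is essentially identical to the paper's argument: pick a Cohen-generic $x$ in the domain, form $B=B_{f(x)}$, observe that $B$ is definable in $V(A_x)$ from $A_x$ and the Borel code of $f$, apply (2) to get $B\in V$, and then read off a condition forcing $B_{f(\dot x)}=\check B$, which translates to a non-meager set mapped into one $F$-class. The only cosmetic difference is that you first localize to an open $[p]$ where $X_0$ is comeager before choosing the generic, whereas the paper chooses the generic first and extracts the condition afterward; the content is the same.

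For $(1)\Rightarrow(2)$ the paper gives no argument at all---it proves only the direction it needs and cites \cite{Sha19} for the full equivalence---so here you are going beyond the paper. Your outline is the right shape (define $x'\mapsto B_{x'}$ via the formula $\varphi$, realize each $B_{x'}$ as $B_{y_{x'}}$, and argue non-constancy from $B\notin V$), and you correctly flag the realization/uniformization step as the crux. Two points are worth making explicit, though. First, the witnesses $y_{x'}$ a priori live only in further forcing extensions of $V[x']$, so producing a ground-model map $x'\mapsto y_{x'}$ genuinely requires the structural hypothesis on the classification $y\mapsto B_y$ (hereditarily countable Scott-type invariants, so that any model in which $B_{x'}$ is hereditarily countable contains a realizing $y$); you acknowledge this but do not carry it out. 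Second, statement~(1) speaks of \emph{Borel} homomorphisms, while your construction yields a Baire-measurable one; this is harmless after restricting to a comeager set on which it is continuous, but should be said. In short: your $(2)\Rightarrow(1)$ matches the paper, and your $(1)\Rightarrow(2)$ is a reasonable blueprint whose hard step is, as you note, exactly what is supplied in \cite{Sha19}.
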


Here, by ``{\bf $\mathbf{B}$ is a potential $\mathbf{F}$-invariant}'' we simply mean that there is a $y$ in some further generic extension, such that $B=B_y$. For example, any set of reals in $V(A_x)$  is a potential $=^+$-invariant for the assignment from Example \ref{Ex:11}, since we can always move to a forcing extension which collapses it to a countable set.

\begin{remark}
Suppose $E$ is generically ergodic. Then condition (1) in Lemma~\ref{lem;generic-erg-symm-model} is equivalent to: 
$E$ is generically $F$-ergodic.
In our examples below $E$ will be generically ergodic and Lemma~\ref{lem;generic-erg-symm-model} will be used to conclude generic $F$-ergodicity.
\end{remark}
We sketch the proof idea of Lemma~\ref{lem;generic-erg-symm-model} for the reader who is familiar with Cohen forcing and countable substructures. 
See for example \cite[Section~26]{Jec03}. We provide below an explicit definition of the forcing when $X=\mathbb{R}^{\mathbb{N}}$, which is the main example used in this paper.
\begin{proof}[Proof sketch of Lemma~\ref{lem;generic-erg-symm-model} (2)$\implies$(1)]
We briefly sketch the proof of the direction which we are going to use  in this paper.
Assume (2), and let $f$ be as in (1). Let $x$ be a generic Cohen real in the domain of $f$, and let $A=A_x$ and $B=B_{f(x)}$. By absoluteness of  $x\mapsto A_x$ and $y\mapsto B_y$, the set
$B$ can be defined in $V(A)$ from $A$ as follows: $B$ is the unique set satisfying that in any generic extension of $V(A)$, if $x'$ is such that $A=A_{x'}$, then $B=B_{f(x')}$. 
By assumption (2), $B$ is in $V$. By the forcing theorem, there is a condition $p$ in the Cohen poset (a non-meager set), forcing that $B_{f(\dot{x})}=\check{B}$. 
In particular, all generics $x$ extending $p$ are mapped to the same $F$-equivalence class.
Now the set of all $x\in p$ which are generic, over some large enough countable model, is a non-meager set sent to the  same $F$-class by $f$.
\end{proof}

Next we develop absolute classifications for (a comeager part of) the Bernoulli shift $P\curvearrowright \mathbb{R}^{\mathbb{N}}$.  By Lemma \ref{PermutationUltrahomogeneous}, it will suffice to consider
permutation groups $P$ of the form $\mathrm{Aut}(\mathcal{N})$, where $\mathcal{N}$ is some countable $\mathcal{L}$-structure.

\begin{definition}
Let $\mathcal{L}$ be a countable language. A {\bf countable $\mathcal{L}$-structure on $\mathbb{R}$} is any $\mathcal{L}$-structure $\mathcal{A}=(A,\ldots)$ whose domain $A$ is a countable subset of $\mathbb{R}$.
\end{definition}

\begin{example}\label{Ex:3}
Let $\mathcal{N}$ be a countable $\mathcal{L}$-structure and let the  $\mathrm{Aut}(\mathcal{N})\curvearrowright \mathrm{Inj}(\mathbb{N},\mathbb{R})$ be the injective part of the Bernoulli shift.  Consider the assignment:
\[x\mapsto\mathcal{A}_x\]
mapping every $x\in\mathrm{Inj}(\mathbb{N},\mathbb{R})$ to the countable $\mathcal{L}$-structure $\mathcal{A}_x=(A_x,\ldots)$ on $\mathbb{R}$, where $A_x=\{x_n\colon n\in \mathbb{N}\}$, and for all relations $R\in \mathcal{L}$ and functions  $f\in \mathcal{L}$ we have:
\begin{align*}
  \mathcal{A}_x\models R(x_{n_0},\ldots,x_{n_{k-1}})\quad  &\iff  \quad \mathcal{N}\models R(n_0,\ldots,n_{k-1})  \\
    \mathcal{A}_x\models f(x_{n_0},\ldots,x_{n_{k-1}})=x_n\quad &\iff\quad \mathcal{N}\models f(n_0,\ldots,n_{k-1})=n
\end{align*}
Then $x\mapsto\mathcal{A}_x$ is an absolute classification for $E(\mathrm{Aut}(\mathcal{N}))$ restricted to $\mathrm{Inj}(\mathbb{N},\mathbb{R})$.
This map is definable (set theoretically) and the fact that it is a complete classification is provable without any set theoretic assumptions. In particular, it defines an absolute classification in any generic extension. To see that it is an absolute classification it only remains to show that, given some $x\in\mathrm{Inj}(\mathbb{N},\mathbb{R})$, the invariant $\mathcal{A}_x$ is the same when calculated in some generic extension. This follows from the definition of $\mathcal{A}_x$.
\end{example}

We similarly have the following absolute classification for the labelled logic action from Section \ref{SS:LabelledAction}.

\begin{example}\label{Ex:4}
Let $\mathcal{L}$ be a countable language and let $\mathrm{Sym}(\mathbb{N})\curvearrowright \mathrm{Str}^{\mathrm{inj}}_{\mathbb{R}}(\mathcal{L},\mathbb{N})$ be the injective part of the labelled logic action. Consider the assignment:
\[(x,\mathcal{N})\mapsto\mathcal{A}_{x,\mathcal{N}}\]
which maps every $(x,\mathcal{N})\in\mathrm{Inj}(\mathbb{N},\mathbb{R})\times \mathrm{Str}(\mathcal{L},\mathbb{N})$ to the 
countable $\mathcal{L}$-structure $\mathcal{A}_{x,\mathcal{N}}$ on $\mathbb{R}$,  which is computed as $\mathcal{A}_{x}$ in Example \ref{Ex:3} with respect to $\mathcal{N}$. For every  $g\in\mathrm{Sym}(\mathbb{N})$ and all $(x,\mathcal{N})\in\mathrm{Inj}(\mathbb{N},\mathbb{R})\times \mathrm{Str}(\mathcal{L},\mathbb{N})$ and $R\in\mathcal{L}$, we have that:  
\begin{align*}
\mathcal{A}_{x,\mathcal{N}}\models R(x_{n_0},\ldots,x_{n_{k-1}}) \implies\\
\mathcal{N}\models R(n_0,\ldots,n_{k-1}) \implies\\
g\mathcal{N}\models R(g(n_0),\ldots,g(n_{k-1})) \implies \\
\mathcal{A}_{z,g\mathcal{N}}\models R(z_{g(n_0)},\ldots,z_{g(n_{k-1})}) \text{ for all $z\in \mathrm{Inj}(\mathbb{N})$},
\end{align*}
and since $(gx)_{g(n)}=x_n$, for all $n\in\mathbb{N}$, it follows that $\mathcal{A}_{gx,g\mathcal{N}}\models R(x_{n_0},\ldots,x_{n_{k-1}})$. Hence,  $(x,\mathcal{N})\mapsto\mathcal{A}_{x,\mathcal{N}}$ is a complete classification of $\simeq_{\mathrm{iso}}$ on $\mathrm{Str}^{\mathrm{inj}}_{\mathbb{R}}(\mathcal{L},\mathbb{N})$. As before, this map is an absolute classification.
 \end{example}

\subsection{Proofs of the main  results}

We may now formulate and prove our main theorem which is a common generalization of both Theorem \ref{T:1} and Theorem \ref{thm;BKL2-vs-F}. Notice that since the equivalence classes of $E(P)$ in $\mathbb{R}^{\mathbb{N}}$ are meager---see Section \ref{SS:BS}---the final claims in Theorem \ref{T:1} and Theorem \ref{thm;BKL2-vs-F} regarding the non-existence of Borel reductions, follow from the associated  strong ergodicity claims. For the definition of  algebraic dimension $\mathrm{dim}_{\mathcal{L}}(\mathcal{N})$ of an $\mathcal{L}$-structure $\mathcal{N}$, see Definition \ref{def:dim_for_structures}.


\begin{theorem}\label{thm: main theorem}
Let $n\in\omega$ and let $P\leq \mathrm{Sym}(\mathbb{N})$ be a locally finite, $(n+1)$-free Polish permutation group. Let also $\sigma$ be an $\mathcal{L}_{\omega_1,\omega}$-sentence so that $\mathrm{dim}_{\mathcal{L}}(\mathcal{N})\leq n$ for every $\mathcal{N}\in\mathrm{Str}(\sigma,\mathbb{N})$. Then, 
$E(P)$ is generically ergodic with respect to $\big( \mathrm{Str}^{\mathrm{inj}}_{\mathbb{R}}(\sigma,\mathbb{N}), \simeq_{\mathrm{iso}}\big)$.
\end{theorem}

Before we proceed to the proof of Theorem \ref{thm: main theorem}, we  recover from it Theorem \ref{T:1}.

\begin{proof}[Proof of Theorem \ref{T:1} from Theorem \ref{thm: main theorem}]
Let $P$ and $Q$ be as in the statement of  Theorem \ref{T:1}. By Lemma \ref{PermutationUltrahomogeneous} we have that $Q=\mathrm{Aut}(\mathcal{N})$ for some countable ultrahomogeneous 
$\mathcal{L}$-structure
with the property that $[A]_{Q}=\mathrm{acl}_{\mathcal{N}}(A)$, for every finite $A\subseteq \mathbb{N}.$
Let $\sigma$ be the Scott sentence of $\mathcal{N}$. Since every $\mathcal{N}'\in\mathrm{Str}(\sigma,\mathbb{N})$ is isomorphic to $\mathcal{N}$, we have that $\mathrm{Aut}(\mathcal{N}')$ is  of algebraic dimension less than or equal to $n$. 
By Theorem \ref{thm: main theorem} we conclude that $E(P)$ is generically ergodic with respect to $\big( \mathrm{Str}^{\mathrm{inj}}_{\mathbb{R}}(\sigma,\mathbb{N}), \simeq_{\mathrm{iso}}\big)$. Furthermore, by Section \ref{SS:LabelledAction} we have that $E_{\mathrm{inj}}( \mathrm{Aut}(\mathcal{N}))\leq_B \big( \mathrm{Str}^{\mathrm{inj}}_{\mathbb{R}}(\sigma,\mathbb{N}), \simeq_{\mathrm{iso}}\big),$ and so $E(P)$ is generically ergodic with respect to $E_{\mathrm{inj}}( \mathrm{Aut}(\mathcal{N}))=E(Q)$ as well. 
\end{proof}

We now turn to the proof of Theorem \ref{thm: main theorem}. We first fix some notation regarding Cohen forcing on $\mathbb{R}^{\mathbb{N}}$.
Consider the poset $\mathbb{P}$ whose elements are finite partial maps $p$ from $\mathbb{N}$  such that $p(n)\subseteq\mathbb{R}$ is an open interval with rational endpoints for every $n$ in the domain of $p$.
A condition $p$ {\bf extends} $q$, denoted by $p\leq q$, if $\mathrm{dom}(p)\supseteq\mathrm{dom}(q)$ and $p(n)\subseteq q(n)$ for every $n\in\mathrm{dom}(q)$. We fix some
 $\mathbb{P}$-generic $G\subseteq \mathbb{P}$ over $V$.
Working in $V[G]$, define $x=(x_n)$ to be the unique element of $\mathbb{R}^{\mathbb{N}}$ such that $x_n\in p(n)$, for every $p\in G$ with $n\in\mathrm{dom}(p)$.  We will refer to $x$ as the {\bf Cohen generic point of $\mathbb{R}^{\mathbb{N}}$}.
In what follows we also fix some  $\mathcal{L}$-structure $\mathcal{M}=(\mathbb{N},\ldots)$ in $V$ and let   \[\mathcal{A}:=\mathcal{A}_{x,\mathcal{M}}\] 
be the image of $(x,\mathcal{M})$ under the absolute classification defined in Example \ref{Ex:4}. We finally let $V(\mathcal{A})\subseteq V[G]$ be the symmetric model associated to $\mathcal{A}\in V[G]$.

We will need the following lemma which is interesting on its own right. The main point is that if $\mathcal{M}$ has enough symmetries, then the associated symmetric model $V(\mathcal{A})$ admits an analysis of the definable supports  similar to the one in \cite{HL64}, for  the basic Cohen model. 

Recall the definition of definable supports in $V(\mathcal{A})$, from Fact~\ref{Fact:1}. Note that any set in the transitive closure of $\mathcal{A}$ is definable (in $V(\mathcal{A})$) using $\mathcal{A}$ and finitely many reals in $A$. Hence, can replace any definable support by a finite subset of $A$. From now on we assume that a definable support $\bar{a}$ in $V(\mathcal{A})$ enumerates a subset of $A$.

\begin{lemma}\label{lem;supports-for-V(A)}
Assume that  $\mathrm{Aut}(\mathcal{M})$ is locally finite. Take any set $S\in V(\mathcal{A})$ such that $S\subseteq V$, and let $\bar{a}$ be a definable support for $S$ in $V(\mathcal{A})$. 
If $[\bar{a}]\subseteq A$ is the algebraic closure of $\bar{a}$  with respect to $\mathrm{Aut}(\mathcal{M})$, then $S\in V([\bar{a}])$.
\end{lemma}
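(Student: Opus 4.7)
The plan is to adapt the classical Halpern--Levy support analysis from the basic Cohen model to this more general setting. First I would set up the group action: $\mathrm{Aut}(\mathcal{M})$ acts on $\mathbb{P}$ by permuting coordinates, lifting to an action on the $\mathbb{P}$-name universe and inducing automorphisms $\tilde{\sigma}$ of $V[G]$. Because the absolute classification $x \mapsto \mathcal{A}_{x,\mathcal{M}}$ from Example~\ref{Ex:3} is $\mathrm{Aut}(\mathcal{M})$-equivariant, the canonical name $\dot{\mathcal{A}}$ is fixed setwise under this action, so the action descends to automorphisms of $V(\mathcal{A})$ fixing $V$ pointwise. By Fact~\ref{Fact:1}, pick a formula $\varphi$ and a parameter $v\in V$ with $S = \{s\in V(\mathcal{A}) : V(\mathcal{A}) \models \varphi(s,\mathcal{A},\bar{a},v)\}$, together with a $\mathbb{P}$-name $\dot{S}$ computing $S$.

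Next, identify $\bar{a}$ with the tuple $(x_n : n \in N_{\bar{a}})$ for some finite $N_{\bar{a}} \subseteq \mathbb{N}$. By Lemma~\ref{PermutationUltrahomogeneous} I may assume $[\cdot]_{\mathrm{Aut}(\mathcal{M})}$ coincides with the model-theoretic $\mathrm{acl}_{\mathcal{M}}$, so the closure is idempotent. Setting $N_{[\bar{a}]} := [N_{\bar{a}}]$, local finiteness makes $N_{[\bar{a}]}$ finite, while idempotence gives $[N_{[\bar{a}]}] = N_{[\bar{a}]}$; hence every $n \notin N_{[\bar{a}]}$ has infinite orbit under $H := \mathrm{Stab}_{\mathrm{Aut}(\mathcal{M})}(N_{[\bar{a}]})$.

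The heart of the proof is a forcing--symmetry claim: for every $y \in V$ and every $p \in G$ with $p \Vdash \check{y} \in \dot{S}$, the restriction $p \res N_{[\bar{a}]}$ already forces $\check{y} \in \dot{S}$. I would argue by contradiction. If some $q \leq p \res N_{[\bar{a}]}$ forces $\check{y} \notin \dot{S}$, pick $\sigma \in H$ whose action on indices sends $\mathrm{dom}(q) \setminus N_{[\bar{a}]}$ entirely off $\mathrm{dom}(p) \cup \mathrm{dom}(q)$; this is possible precisely because such indices have infinite $H$-orbits. Then $\tilde{\sigma}(q)$ agrees with $p$ on their common domain (which lies inside $N_{[\bar{a}]}$, where $\sigma$ is the identity), so $p \cup \tilde{\sigma}(q)$ is a condition. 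Since $\sigma \in H \subseteq \mathrm{Stab}(N_{\bar{a}})$, the automorphism $\tilde{\sigma}$ fixes $\bar{a}$ pointwise, fixes $\mathcal{A}$, and fixes $v$, hence fixes $\dot{S}$ up to forcing equivalence; so $\tilde{\sigma}(q) \Vdash \check{y} \notin \dot{S}$. The common extension then forces both $\check{y} \in \dot{S}$ and $\check{y} \notin \dot{S}$, a contradiction.

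This shows that ``$y \in S$'' is decided by the restriction of $G$ to coordinates in $N_{[\bar{a}]}$, so $S$ is definable in $V[G]$ from $V$ and from $(x_n : n \in N_{[\bar{a}]})$. Since $N_{[\bar{a}]}$ is finite and every $x_n$ with $n \in N_{[\bar{a}]}$ is an element of $[\bar{a}]$ already present in $V([\bar{a}])$, this finite Cohen extension coincides with $V([\bar{a}])$, giving $S \in V([\bar{a}])$. The main technical obstacle is producing the automorphism $\sigma \in H$ at each instance of the symmetry argument: this uses both idempotence of the algebraic closure (so that $H$-orbits off $N_{[\bar{a}]}$ are infinite) and local finiteness of $\mathrm{Aut}(\mathcal{M})$ (so that $N_{[\bar{a}]}$ stays finite and the entire support analysis is finitary), which is exactly where the standing hypothesis is consumed.
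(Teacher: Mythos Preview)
Your proposal is correct and follows essentially the same Halpern--Levy style symmetry argument as the paper. The only cosmetic differences are that the paper packages the conclusion by explicitly defining a set $S'\in V(\bar a)$ via the forcing relation and showing $S=S'$, and that in the key symmetry step the paper works semantically with two generics $G$ and $G'=\{p\circ\pi:p\in G\}$ (using genericity among an infinite sequence $(\pi_j)$ from Neumann's lemma to land in $q$), whereas you work syntactically with a single automorphism of the name universe chosen directly by Neumann's lemma to make $\tilde\sigma(q)$ compatible with $p$; both variants are standard and equivalent. One very minor remark: you do not need to invoke Lemma~\ref{PermutationUltrahomogeneous} for idempotence of $[\,\cdot\,]_{\mathrm{Aut}(\mathcal{M})}$, since local finiteness alone already gives $[[A]_P]_P=[A]_P$ (because $P_{[A]_P}$ has finite index in $P_A$).
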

\begin{proof}
By local finiteness we may assume without loss of generality, by enlarging $\bar{a}$, that $\bar{a}=\left(a_0,\dots,a_{k-1}\right)$ enumerates $[\bar{a}]$.
Fix a formula $\phi$ and fix some parameter $v\in V$ such that $S=\big\{s\in V\mid V(\mathcal{A})\models\varphi(s,v,\bar{a}, \mathcal{A})\big\}$. 
By reflection, let $\xi$ be a large enough ordinal so that $S\subseteq V_\xi$ and so that for any $p\in\mathbb{P}$ we have:
\[p\Vdash \varphi^{V(\mathcal{\dot{A}})}(\check{s},\check{v},\dot{\bar{a}},\dot{\mathcal{A}}) \text{ if and only if } V_\xi\models \big( p\Vdash \varphi^{V(\dot{\mathcal{A}})}(\check{s},\check{v},\dot{\bar{a}},\dot{\mathcal{A}})\big).\]

Fix  $n_0,\ldots,n_{k-1}\in\mathbb{N}$ so that $a_i = x_{n_i}$.
Working in $V(\bar{a})$ we define
\[S'=\{s\in V_\xi \mid \text{ for any } p, \text{ if } a_i\in p(n_i) \text{ for } i<k, \text{ then } V_\xi\models (p\Vdash \varphi^{V(\mathcal{\dot{A}})}(\check{s},\check{v},\dot{\bar{a}}, \dot{\mathcal{A}}))\}.\]
Here by $\dot{\bar{a}}$ we mean $\dot{x}_{n_0},\dots,\dot{x}_{n_{k-1}}$.
Since the forcing relation is definable, $S'$ is in $V(\bar{a})$. 
We will conclude the proof of the lemma by showing that $S=S'$.

To  see that $S'\subseteq S$, let $s\in S'$ and take any condition $p\in G$ such that $n_0,...,n_{k-1}\in\mathrm{dom}(p)$. By definition of $S'$ and by the choice of $\xi$, $p\Vdash\varphi^{V(\mathcal{\dot{A}})}(\check{s},\check{v},\dot{\bar{a}}, \dot{\mathcal{A}})$. Since $p\in G$ we have that $\varphi^{V(\mathcal{A})}(s,v,\bar{a}, \mathcal{A})$ holds in $V[G]$, and therefore, $\varphi(s,v,\bar{a}, \mathcal{A})$ holds in $V(\mathcal{A})$, and so $s\in S$.
The converse direction  $S\subseteq S'$  follows from the next claim.
\begin{claim}
For any $p,q\in\mathbb{P}$, if $a_i\in p(n_i)\cap q(n_i)$ for $i<k$, then for any $s\in V$,
\[p\Vdash \varphi^{V(\dot{\mathcal{A}})}(\check{s},\check{v},\dot{\bar{a}}, \dot{\mathcal{A}})\iff q\Vdash \varphi^{V(\dot{\mathcal{A}})}(\check{s},\check{v},\dot{\bar{a}}, \dot{\mathcal{A}}).\]
\end{claim}
\begin{proof}
 Assume towards a contradiction that $p,q$ are as in the claim but we additionally have that $p\Vdash \varphi^{V(\dot{\mathcal{A}})}(\check{x},\check{v},\dot{\bar{a}}, \dot{\mathcal{A}})$ and $q\Vdash \neg\varphi^{V(\dot{\mathcal{A}})}(\check{x},\check{v},\dot{\bar{a}}, \dot{\mathcal{A}})$.
Without loss of generality we may also assume that $p\in G$.
Let $L:=\{m_0,\ldots,m_{\ell-1}\}\subseteq \mathbb{N}=\mathrm{dom}(\mathcal{M})$ be disjoint from $K:=\{n_0,...,n_{k-1}\}$, so that $L\cup K$     contains the domains of $p$ and $q$. 
Since $\bar{a}=[\bar{a}]_{\mathrm{Aut}(\mathcal{M})}$, we have that $K=[K]_{\mathrm{Aut}(\mathcal{M})}$. Hence, for every $\ell\in \mathbb{N}\setminus K$ the orbit of $\ell$ under the stabilizer 
$\mathrm{Aut}(\mathcal{M})_K$ of $K$ is infinite. By the Neumann lemma \cite{neumann1976structure}*{Lemma 2.3}, for any finite $\Delta\subset \mathbb{N}\setminus K$ there is some $\pi\in \mathrm{Aut}(\mathcal{M})_K$ so that $\Delta \cap \pi\Delta=\emptyset$. By repeated applications of the Neumann lemma, we may find, in the ground model $V$, an infinite sequence $(\pi_j)_j$  in $\mathrm{Aut}(\mathcal{M})$ so that:
\begin{enumerate}
    \item $\pi_j$ fixes $K$ pointwise; and
    \item $\pi_i(L)\cap \pi_j(L) = \emptyset $ for all $i\neq j$.
\end{enumerate}

By genericity of $G$ there is some  $j$ such that $x\circ\pi_j$ extends  $q$.
Set $\pi:=\pi_j$ and let  $G':=\{p\circ\pi \colon p\in G\}$ and $x':=x\circ \pi$. We have that: 
\begin{enumerate}
    \item[(i)] $G'$ is $\mathbb{P}$-generic and $V[G']=V[G]$, since $\pi\in V$;
    \item[(ii)] $x'(n_i)=x(n_i)=a_i$ and therefore $\dot{\bar{a}}[G']=\bar{a}$, by (1) above;
    \item[(iii)] $\dot{\mathcal{A}}[G']=\dot{\mathcal{A}}[G]=\mathcal{A}$, since $x\mathrel{E(\mathrm{Aut}(\mathcal{M}))}x'$; see Example~\ref{Ex:3}.
\end{enumerate}
By (ii),(iii), and since $x'$ extends $q$, we have that $\neg\varphi^{V(\mathcal{A})}(s,v,\bar{a},\mathcal{A})$ holds in $V[G']$. 
On the other hand, since $\dot{\mathcal{A}}[G]=\mathcal{A}$ and $x$ extends $p$, we have that $\varphi^{V(\mathcal{A})}(s,v,\bar{a},\mathcal{A})$ holds in $V[G]$.
We conclude that $V(\mathcal{A})$ satisfies both $\neg\varphi(s,v,\bar{a},\mathcal{A})$ and $\varphi(s,v,\bar{a},\mathcal{A})$, a contradiction.
\end{proof}
This concludes the proof of Lemma \ref{lem;supports-for-V(A)}
\end{proof}

We will use the following folklore ``mutual genericity fact''.
\begin{fact}
Suppose $F_1,F_2\subseteq A$ are finite. Then $V(F_1)\cap V(F_2)=V(F_1\cap F_2)$.
\end{fact}
The reason is as follows. We may write $F_1=F\cup a_1$ and $F_2=F\cup a_2$ where $F= F_1\cap F_2$ and $a_1,a_2$ are disjoint finite subsets of $A$. Now $V(F_i)=V(F)[a_i]$, where $a_i$ is generic over $V(F)$ for the poset $\mathbb{Q}_i$ for adding a real in $\mathbb{R}^{|a_i|}$. Furthermore, $(a_1,a_2)$ can be seen as a generic for the poset $\mathbb{Q}_1\times\mathbb{Q}_2$.
It follows that $V(F)[a_1]\cap V(F)[a_2]=V(F)$.

Let $S\in V(\mathcal{A})$ with $S\subseteq V$ and let $F\subseteq A =\mathrm{dom}(\mathcal{A})$ be a set. We say that $F$ is \textbf{a support for $S$} if $S\in V(F)$.
We define \textbf{the support $\mathrm{supp}(S)$  of $S$} to be the intersection of all  $F\subseteq A$ which are supports for $S$. Under the assumptions of the previous lemma, for every $S$ as above,  $\mathrm{supp}(S)$ exists and is a finite subset of $A$ such that  $S\in V(\mathrm{supp}(S))$.

We may now conclude with the proof of Theorem \ref{thm: main theorem}.

\begin{proof}[Proof of Theorem \ref{thm: main theorem}]
By Lemma~\ref{PermutationUltrahomogeneous} we may write $P=\mathrm{Aut}(\mathcal{M})$ for a ultrahomogeneous structure $\mathcal{M}$ so that the closure operations $[-]_P$ and $\mathrm{acl}_{\mathcal{M}}(-)$ coincide.

We will apply  Lemma~\ref{lem;generic-erg-symm-model}~$(2)\implies(1)$. For that, let $x\in\mathrm{Inj}(\mathbb{N},\mathbb{R})$ be the Cohen generic point associated to the $\mathbb{P}$-generic filter $G$,  and set $\mathcal{A}:=\mathcal{A}_{x}$ the corresponding invariant for the shift action $P\curvearrowright\mathrm{Inj}(\mathbb{N},\mathbb{R})$ as in Example~\ref{Ex:3}. Let $\mathcal{B}=(B,...)$ be a potential invariant for $\big( \mathrm{Str}^{\mathrm{inj}}_{\mathbb{R}}(\sigma,\mathbb{N}), \simeq_{\mathrm{iso}}\big)$ in $V(\mathcal{A})$ which is definable only from $\mathcal{A}$ and parameters from $V$. Recall that by potential invariant we mean that there are  $y\in\mathrm{Inj}(\mathbb{N},\mathbb{R})$ and  $\mathcal{N}\in \mathrm{Str}(\sigma,\mathbb{N})$, in some further generic extension of $V(\mathcal{A})$, so that $\mathcal{B}:=\mathcal{B}_{y,\mathcal{N}}$.
To apply Lemma~\ref{lem;generic-erg-symm-model}, it remains to show that $\mathcal{B}\in V$.

\begin{claim}
Set $B:=\mathrm{dom}(\mathcal{B})$. We have that  $B\subseteq V$.
\end{claim}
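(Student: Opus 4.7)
The plan is to argue by contradiction, combining Lemma~\ref{lem;supports-for-V(A)} with the $(n+1)$-freeness of $P=\mathrm{Aut}(\mathcal{M})$ and the algebraic dimension hypothesis on $\sigma$. Suppose some $b\in B$ fails to lie in $V$. Since $\mathcal{B}\in V(\mathcal{A})$ and the latter is transitive, $b\in V(\mathcal{A})$; as $b$ is a real, $b\subseteq\omega\subseteq V$, so Lemma~\ref{lem;supports-for-V(A)} provides a finite, algebraically closed support $F:=\mathrm{supp}(b)=[F]_{\mathrm{Aut}(\mathcal{A})}\subseteq A$. The assumption $b\notin V$ forces $F\neq\emptyset$, and we write $F=\{x_n:n\in K\}$ for a finite nonempty $K\subseteq\mathbb{N}$.

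Next, I would apply the $(n+1)$-freeness of $P$ to $K$ to obtain $\pi_0,\ldots,\pi_n\in P$ so that, setting $K_i:=\pi_iK$ and $F_i:=\{x_m:m\in K_i\}$, we have $[F_i]_{\mathrm{Aut}(\mathcal{A})}\cap[\bigcup_{j\neq i}F_j]_{\mathrm{Aut}(\mathcal{A})}=\emptyset$ for every $i\leq n$. Each $\pi_i\in V$ lifts---through the same ``shift the generic'' construction carried out inside the proof of Lemma~\ref{lem;supports-for-V(A)}---to a ground-model automorphism of $\mathbb{P}$ and then to an $\in$-automorphism $\hat{\pi}_i$ of the symmetric model $V(\mathcal{A})$ that fixes $V$ pointwise, fixes $\mathcal{A}$ (and hence $B$) setwise, and acts on $A$ via $x_n\mapsto x_{\pi_i(n)}$. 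In particular $\hat{\pi}_i$ conjugates minimal supports, so $b_i:=\hat{\pi}_i(b)\in B$ has $\mathrm{supp}(b_i)=F_i$; since the $F_i$ are pairwise disjoint and all nonempty, $b_0,\ldots,b_n$ are pairwise distinct elements of $B$.

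Finally, I would invoke the hypothesis on $\sigma$. Because $\mathcal{B}$ is isomorphic, via $y$, to some $\mathcal{N}\in\mathrm{Str}(\sigma,\mathbb{N})$, and the algebraic dimension condition is absolute and preserved under isomorphism, some $b_i$---say $b_0$---lies in $\mathrm{acl}_{\mathcal{B}}(\{b_1,\ldots,b_n\})$, witnessed by an $\mathcal{L}$-formula $\varphi$ and parameters $c_1,\ldots,c_k\in\{b_1,\ldots,b_n\}$. The finite set $T:=\{c\in B:\mathcal{B}\models\varphi(c,c_1,\ldots,c_k)\}\ni b_0$ is definable in $V(\mathcal{A})$ from $\mathcal{A}$ together with $c_1,\ldots,c_k$; by Fact~\ref{Fact:1} and Lemma~\ref{lem;supports-for-V(A)}, each $c_j$ is itself definable from $\mathcal{A}$ and parameters in $S:=F_1\cup\cdots\cup F_n$. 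Sorting the finite set $T\subseteq\mathbb{R}$ by the usual order on $\mathbb{R}$ inside $V(\mathcal{A})$ realizes $b_0$ as the $\ell$-th element of $T$ for some $\ell$, hence as definable from $\mathcal{A}$ and $S$. Since $b_0\subseteq V$, Lemma~\ref{lem;supports-for-V(A)} then yields $b_0\in V([S]_{\mathrm{Aut}(\mathcal{A})})$; combined with $b_0\in V(F_0)$ and the intersection property $V(F_0)\cap V([S]_{\mathrm{Aut}(\mathcal{A})})=V(F_0\cap[S]_{\mathrm{Aut}(\mathcal{A})})$, we conclude that $F_0\subseteq[S]_{\mathrm{Aut}(\mathcal{A})}$. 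Together with $F_0\subseteq[F_0]_{\mathrm{Aut}(\mathcal{A})}$ and $[F_0]_{\mathrm{Aut}(\mathcal{A})}\cap[S]_{\mathrm{Aut}(\mathcal{A})}=\emptyset$, this forces $F_0=\emptyset$, the sought contradiction.

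The principal technical point is the support-transport claim used in the second paragraph: one must verify that the generic-shift argument from the proof of Lemma~\ref{lem;supports-for-V(A)} genuinely lifts, through the standard symmetric-name bookkeeping, to an $\in$-automorphism $\hat{\pi}_i$ of $V(\mathcal{A})$ conjugating minimal supports by $\bar{\pi}_i$. Once this is in hand, the remainder of the argument is an essentially combinatorial matching of the $(n+1)$-freeness of $P$ against the algebraic dimension bound that $\mathcal{B}$ inherits from $\sigma$.
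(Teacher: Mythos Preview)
Your overall strategy is right, and the endgame---combining the algebraic-dimension bound on $\mathcal{B}$ with Lemma~\ref{lem;supports-for-V(A)} and mutual genericity---matches the paper. The genuine gap is precisely the ``support-transport'' step you flag: the map $\hat{\pi}_i$ you describe does not exist. Since $V(\mathcal{A})$ is a transitive model, any $\in$-automorphism of it is the identity by a straightforward $\in$-induction; there is no nontrivial $\hat{\pi}_i$ permuting the Cohen reals $x_n$ while fixing $V$ pointwise. The action of $\pi_i$ lives only at the level of the forcing $\mathbb{P}$ and of $\mathbb{P}$-names, and the induced ``map'' $\tau[G]\mapsto\tau[\pi_i^{-1}G]$ is not well-defined on elements of $V[G]$, since distinct names for the same set can be sent to different sets.

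The paper replaces the purported automorphism with a forcing argument. One first fixes a condition $p\in G$ deciding that $\dot b\in\dot B\setminus V$ and that $\dot F=\mathrm{supp}(\dot b)=\{\dot x_{m_0},\ldots,\dot x_{m_{k-1}}\}$. One then enlarges $K$ to a finite algebraically closed set containing $\mathrm{dom}(p)\cup\{m_0,\ldots,m_{k-1}\}$---not merely the indices of $F$---applies $(n+1)$-freeness to this larger $K$, and finally uses Neumann's lemma together with the genericity of $x$ to translate all of the $\pi_i$ simultaneously so that each $x^i:=x\circ\pi_i$ extends $p$. This last adjustment is what guarantees $p\in G_i:=\{q\circ\pi_i:q\in G\}$ for every $i$; without it you cannot conclude that the interpretation $b_i:=\dot b[G_i]$ lies in $B$, nor that $F_i:=\dot F[G_i]$ is its (nonempty) minimal support. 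Once $p\in G_i$ for all $i$, the $b_i$ and $F_i$ have exactly the properties you need, and your last two paragraphs go through essentially as written and agree with the paper's argument.
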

\begin{proof}[Proof of Claim]

Assume towards a contradiction that there is some $b\in B$ not in $V$.
Let $F$ be the support of $b$. This is well defined by Lemma~\ref{lem;supports-for-V(A)}, as $b$ is a real, and therefore can be thought of as a subset of $V$.
Since $b\in V(F)$, it follows that F is not empty. Let  $m_0,\ldots,m_{k-1}\in\mathbb{N}$ with $F=\{x_{m_i}\colon i<k\}$.
Fix a condition $p$ forcing that $\dot{b}\in \dot{B}$ and that $\{\dot{x}_{m_i}: i<k\}$ is the support of $\dot{b}$. Here dotted symbols are some fixed names for the corresponding objects in the generic extension. In particular $\dot{x}$ is the name for the Cohen generic sequence of reals.
By local finiteness of $\mathcal{M}$ we may choose some finite  $K\subseteq \mathbb{N}=\mathrm{dom}(\mathcal{M})$ which contains  \[\mathrm{dom}(p)\cup\{m_0,\ldots,m_{k-1}\}\] 
and it is algebraically closed, i.e., $[K]_{\mathrm{Aut}(\mathcal{M})}=K$. Since $\mathcal{M}$ is $(n+1)$-free, there are $\pi_0,\ldots,\pi_n\in \mathrm{Aut}(\mathcal{M})$ so that for all $i\leq n$ we have  that
\[[\pi_i(K)]_{\mathrm{Aut}(\mathcal{M})} \; \text{ is disjoint from } \; [\bigcup_{j: j\neq i} \pi_j(K)]_{\mathrm{Aut}(\mathcal{M})} \]
Notice now that  $[\emptyset]_{\mathrm{Aut}(\mathcal{M})}=\emptyset$. Indeed, any $(n+1)$-free permutation group is in particular $1$-free; and any locally finite $1$-free permutation group $P$ satisfies $[\emptyset]_P=\emptyset$.
Hence, the orbit of each $\ell\in [\pi_i(K)]_{\mathrm{Aut}(\mathcal{M})}$ is infinite for  all $i\leq n$. By the Neumann lemma \cite{neumann1976structure}*{Lemma 2.3}, for every finite $E\subseteq \mathbb{N}$ there is some  $\pi_E\in \mathrm{Aut}(\mathcal{M})$, so that    $(\pi_E\circ \pi_i)(K)\cap E=\emptyset$ for all $i\leq n$.
Hence, by the genericity of $x\in\mathrm{Inj}(\mathbb{N},\mathbb{R})$ we may assume that for all $i\leq n$ we have that $x^i:=x\circ\pi_i$ extends $p$. Let $G_i:=\{q\circ\pi_i\colon q\in G \}$ be the associated filters. Since all $\pi_i$ above can be chosen in $V$, we have that each $G_i$ is generic and $V[G]=V[G_i]$. 

For each $i\leq n$, working in $V[G_i]$, since $p\in G_i$ and $x^i\mathrel{E(P)}x$, we have that: the realization of $\dot{\mathcal{A}}$ is the same $\mathcal{A}$; the realization of $\dot{\mathcal{B}}$ is the same $\mathcal{B}$; the interpretation of $\dot{F}$ is some $F_i\subseteq A$ such that $\set{F_i}{i=0,...,n}$ are pairwise disjoint; and $F_i$ is the support of the interpretation $b_i$ of $\dot{b}$  in  $V[G_i]$.
Since $p\in G_i$, we have that $b_i\in B$ for all $i\leq n$.
By assumption, there is some $i$ such that, in $\mathcal{B}$, $b_i$ is in the $\mathcal{L}$-algebraic closure $\mathrm{acl}_{\mathcal{B}}(\set{b_j}{j\neq i})$. 
Here we are using that  ``for all $\mathcal{N}\in \mathrm{Str}(\sigma,\mathbb{N})$ and every $b_0,\ldots,b_n\in\mathrm{dom}(\mathcal{N})$, there is some $i\leq n$ so that $b_i\in\mathrm{acl}_{\mathcal{N}}(\{b_j\colon j\neq i\})$" is equivalent to a $\mathbf{\Pi}^1_1$ statement, and therefore absolute. Specifically, the set of all $\mathcal{N}$ satisfying the desired property is a Borel subset of $\mathrm{Str}(\sigma,\mathbb{N})$.

It follows that $b_i$ is definable in $V(\mathcal{A})$ from $\set{b_j}{j\neq i}$ and $\mathcal{B}$. This is because there is a finite set of reals, definable from $\set{b_j}{j\neq i}$ and $\mathcal{B}$, which contains $b_i$, so $b_i$ can be defined using a definable linear ordering of the reals.
Recall that $\mathcal{B}$ is definable from $\mathcal{A}$, by assumption. Since $\seqq{b_j}{j\neq i}$ is definable from $\seqq{F_j}{j\neq i}$, it follows that $\bigcup_{j\colon j\neq i} F_j$ is a definable support for $b_i$ in $V(\mathcal{A})$ and by Lemma~\ref{lem;supports-for-V(A)} we have that $b_i\in V([\bigcup_{j\colon j\neq i} F_j]_{\mathrm{Aut}(\mathcal{A})})$. By mutual genericity we have
\[  V(F_i) \bigcap V([\bigcup_{j\colon j\neq i} F_j]_{\mathrm{Aut}(\mathcal{A})})  =  V(F_i \; \bigcap \; [\bigcup_{j\colon j\neq i} F_j]_{\mathrm{Aut}(\mathcal{A})})=V  \]
But then, $\mathrm{supp}(b_i)=\emptyset$, contradicting the assumption that   $\mathrm{supp}(b)\neq \emptyset$.
\end{proof}
By assumption, $\emptyset$ is a definable support for $B$, and so $B\in V$, by Lemma~\ref{lem;supports-for-V(A)}.
To conclude that $\mathcal{B}\in V$, we need to show that for any symbol $R\in \mathcal{L}$, its interpretation $R^{\mathcal{B}}$ is in $V$.
By the claim, $R^{\mathcal{B}}$ is a subset of $V$. By assumption, $\emptyset$ is a definable support for $R^{\mathcal{B}}$ (as the entire structure $\mathcal{B}$ is definable from $\mathcal{A}$ without parameters). It follows from Lemma~\ref{lem;supports-for-V(A)} that $R^{\mathcal{B}}\in V$, as required.
\end{proof}
Recall from Example~\ref{Ex:2} that for $n\geq 2$, $\mathrm{Aut}(\mathcal{N}^{\mathrm{BKL}}_n)$ is locally finite, $n$-free, and has algebraic dimension precisely $n$.
It follows from Theorem~\ref{T:1} that
\begin{equation*}
    E_{\mathrm{inj}}(\mathrm{Aut}(\mathcal{N}^{\mathrm{BKL}}_{n+1}))\textrm{ is generically }E_{\mathrm{inj}}(\mathrm{Aut}(\mathcal{N}^{\mathrm{BKL}}_n))\textrm{-ergodic}.
\end{equation*}
This concludes Corollary~\ref{cor:1}.
In particular, $E_{\mathrm{inj}}(\mathrm{Aut}(\mathcal{N}^{\mathrm{BKL}}_{n+1}))$ is not Borel reducible to $E_{\mathrm{inj}}(\mathrm{Aut}(\mathcal{N}^{\mathrm{BKL}}_n))$.
In the other direction, we do not know if $E_{\mathrm{inj}}(\mathrm{Aut}(\mathcal{N}^{\mathrm{BKL}}_n))$ is Borel reducible to $E_{\mathrm{inj}}(\mathrm{Aut}(\mathcal{N}^{\mathrm{BKL}}_{n+1}))$.

\subsection{A strictly $\leq_B$-increasing sequence}\label{subsec : positive reduction}

Recall that each structure $\mathcal{N}_n^{\mathrm{BKL}}$ is defined as the unique ultrahomogenous $\mathcal{L}_n$-structure satisfying properties (1)-(4) in Example~\ref{Ex:2}. These properties can be captured by an $\mathcal{L}_{\omega_1,\omega}$-sentence $\sigma_n$. The equivalence relations $(\mathrm{Str}_{\mathbb{R}}^{\mathrm{inj}}(\sigma_n,\mathbb{N}),\simeq_{\mathrm{iso}})$ were studied in \cites{KP}, where it was shown that they are pairwise incompatible with respect to $*$-reductions\footnote{In \cites{KP}, structures satisfying $\sigma_n$ were called  $\mathrm{BKL}_n$-structures and $\mathrm{Str}_{\mathbb{R}}^{\mathrm{inj}}(\sigma_n,\mathbb{N})$ was denoted by $\mathrm{Mod}_\omega(\widehat{\mathcal{B}_n})$}. Their relationship with respect to Borel reducibility was left open.
We show that, with respect to Borel reducibility, these form a strictly increasing sequence.

\begin{proposition}
 $(\mathrm{Str}_{\mathbb{R}}^{\mathrm{inj}}(\sigma_n,\mathbb{N}),\simeq_{\mathrm{iso}})<_B(\mathrm{Str}_{\mathbb{R}}^{\mathrm{inj}}(\sigma_{n+1},\mathbb{N}),\simeq_{\mathrm{iso}})$.
\end{proposition}
\begin{proof}
Since $E_{\mathrm{inj}}(\mathrm{Aut}(\mathcal{N}^{\mathrm{BKL}}_{n+1}))\leq_B(\mathrm{Str}_{\mathbb{R}}^{\mathrm{inj}}(\sigma_{n+1},\mathbb{N}),\simeq_{\mathrm{iso}})$, it follows from Theorem~\ref{T:1} that $(\mathrm{Str}_{\mathbb{R}}^{\mathrm{inj}}(\sigma_{n+1},\mathbb{N}),\simeq_{\mathrm{iso}})$ is not Borel reducible to $(\mathrm{Str}_{\mathbb{R}}^{\mathrm{inj}}(\sigma_n,\mathbb{N}),\simeq_{\mathrm{iso}})$.

We now turn to the positive reduction. The point is that there is a simple way to code an $\mathcal{L}_n$-structure which satisfies $\sigma_n$ as a $\mathcal{L}_{n+1}$-structure which satisfies $\sigma_{n+1}$. This is done as follows.

Let $\mathcal{A}=(A,R_j,s_j)_{j\in\omega}$ be an $\mathcal{L}_n$ structure. Define a structure $\mathcal{\bar{A}}=(A,\bar{R}_j,\bar{s}_j)_{j\in\omega}$ as follows, where $\bar{R}_j$ are ${n+1}$-ary relations and $\bar{s}_j$ are $n+1$-ary function symbols.
\begin{itemize}
    \item $\bar{R}_j(a_0,...,a_n)\iff R_j(a_0,...,a_{n-1})$;
    \item $\bar{s}_j(a_0,...,a_n)=s_j(a_0,...,a_{n-1})$.
\end{itemize}
If $\mathcal{A}$ satisfies $\sigma_n$, then $\mathcal{\bar{A}}$ satisfies $\sigma_{n+1}$. Moreover, given $\mathcal{A}_1,\mathcal{A}_2$, then $\mathcal{A}_1\simeq \mathcal{A}_2\iff \mathcal{\bar{A}}_1\simeq \mathcal{\bar{A}}_2$. Moreover, the map from $\mathrm{Str}(\mathcal{L}_n,\mathbb{N})$ to $\mathrm{Str}(\mathcal{L}_{n+1},\mathbb{N})$ sending $\mathcal{A}=(\mathbb{N},\bar{R}_j,\bar{s}_j)_{j\in\omega}$ to $\mathcal{\bar{A}}=(\mathbb{N},\bar{R}_j,\bar{s}_j)_{j\in\omega}$ is Borel. This induces a Borel map from $\mathrm{Str}_{\mathbb{R}}^{\mathrm{inj}}(\sigma_n,\mathbb{N})$ to $\mathrm{Str}_{\mathbb{R}}^{\mathrm{inj}}(\sigma_n,\mathbb{N})$ which is the desired reduction.
\end{proof}

\section{Unpinned equivalence relations}\label{sec;pinned}

We recall the definition of pinned equivalence relations and pinned cardinals and explain the results in \cite{Zap11} about the equivalence relation $=^+\restriction Z$. We then present the equivalence relations $E_{\mathrm{inj}}(\mathrm{Aut}(\mathcal{N}^{\mathrm{BKL}}_{n+1}))$ in this context and give another proof that $E_{\mathrm{inj}}(\mathrm{Aut}(\mathcal{N}^{\mathrm{BKL}}_{n+1})\not\leq_B E_{\mathrm{inj}}(\mathrm{Aut}(\mathcal{N}^{\mathrm{BKL}}_{n})$, a weak form of Corollary~\ref{cor:1}.
Finally, we prove Theorem~\ref{thm;BKL2-vs-F}.

\begin{definition}[\cite{LZ}]\label{def;pinned}
Let $E$ be an analytic equivalence relation on a Polish space $X$. Let $\mathbb{P}$ be a poset and $\tau$ a $\mathbb{P}$-name. 
\begin{itemize}
    \item The name $\tau$ is $E$-pinned if $\mathbb{P}\times \mathbb{P}$ forces that $\tau_{l}$ is $E$-equivalent to $\tau_{r}$, where $\tau_{l}$ and $\tau_{r}$ are the interpretation of $\tau$ using the left and right generics respectively.
    \item If $\tau$ is $E$-pinned the pair $\left<\mathbb{P},\tau\right>$ is called an $E$-\textbf{pin}.
    \item An $E$-pin $\left<\mathbb{P},\sigma\right>$ is \textbf{trivial} if there is some $x\in X$ such that $\mathbb{P}\Vdash \sigma\mathrel{E}\check{x}$.
    \item $E$ is \textbf{pinned} if all $E$-pins are trivial.
    \item Given two $E$-pins $\left<\mathbb{P},\sigma\right>$ and $\left<\mathbb{Q},\tau\right>$, say that they are $\bar{E}$-equivalent, $\left<\mathbb{P},\sigma\right>\mathrel{\bar{E}}\left<\mathbb{Q},\tau\right>$, if $\mathbb{P}\times \mathbb{Q} \Vdash \sigma \mathrel{E}\tau$.
    \item The \textbf{pinned cardinal of $E$}, $\kappa(E)$, is the smallest $\kappa$ such that every $E$-pin is $\bar{E}$-equivalent to an $E$-pin with a post of size $<\kappa$.
\end{itemize}
\end{definition}
\begin{lemma}[\cite{LZ}]\label{lem;pin-card}
If $E$ is Borel reducible to $F$ then $\kappa(E)\leq \kappa(F)$.
\end{lemma}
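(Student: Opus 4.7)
\medskip

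The plan is to transport the $E$-pin through the Borel reduction $f\colon X\to Y$, apply the hypothesis on $\kappa(F)$ to get a small-poset $F$-pin, and then use analytic absoluteness to pull the result back to an $E$-pin on the same small poset. Let $\langle \mathbb{P},\sigma\rangle$ be an arbitrary $E$-pin; I will exhibit an $\bar{E}$-equivalent $E$-pin on a forcing of cardinality strictly less than $\kappa(F)$, which immediately gives $\kappa(E)\leq\kappa(F)$.

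First I observe that $\langle \mathbb{P}, f(\sigma)\rangle$ is an $F$-pin. Indeed, a Borel code for $f$ and an analytic code for $F$ in $V$ give meaning to $f(\sigma)$ in any forcing extension, and since $\mathbb{P}\times\mathbb{P}\Vdash \sigma_\ell\mathrel{E}\sigma_r$, the reduction property forces $f(\sigma_\ell)\mathrel{F} f(\sigma_r)$. By the definition of $\kappa(F)$ there is an $F$-pin $\langle \mathbb{Q},\tau\rangle$ with $|\mathbb{Q}|<\kappa(F)$ satisfying $\langle \mathbb{P},f(\sigma)\rangle\mathrel{\bar{F}}\langle \mathbb{Q},\tau\rangle$.

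The heart of the argument is pulling $\tau$ back to a name $\sigma'$ over $\mathbb{Q}$. Working in $V^{\mathbb{Q}}$, consider the $\Sigma^1_1$ statement with parameter $\tau$
\[
\exists x\in X\, \bigl(f(x)\mathrel{F}\tau\bigr).
\]
In the product extension $V^{\mathbb{Q}\times\mathbb{P}}$ this is witnessed by the interpretation of $\sigma$, by the $\bar{F}$-equivalence established above. Since $\Sigma^1_1$ statements with parameters are absolute between $V^{\mathbb{Q}}$ and $V^{\mathbb{Q}\times\mathbb{P}}$ (via absoluteness of ill-foundedness for trees on $\omega$), a witness exists already in $V^{\mathbb{Q}}$, so by the forcing theorem there is a $\mathbb{Q}$-name $\sigma'$ with $\mathbb{Q}\Vdash f(\sigma')\mathrel{F}\tau$.

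It remains to verify that $\langle \mathbb{Q},\sigma'\rangle$ is an $E$-pin $\bar{E}$-equivalent to $\langle \mathbb{P},\sigma\rangle$. In $V^{\mathbb{Q}\times\mathbb{Q}}$ the chain
\[
f(\sigma'_\ell)\mathrel{F}\tau_\ell\mathrel{F}\tau_r\mathrel{F} f(\sigma'_r)
\]
combined with the reduction property gives $\sigma'_\ell\mathrel{E}\sigma'_r$, so $\langle \mathbb{Q},\sigma'\rangle$ is an $E$-pin; and in $V^{\mathbb{P}\times\mathbb{Q}}$ the chain $f(\sigma)\mathrel{F}\tau\mathrel{F} f(\sigma')$ gives $\sigma\mathrel{E}\sigma'$, establishing the $\bar{E}$-equivalence. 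The main delicate point is the absoluteness step, which depends only on the fact that $f$ is Borel and $F$ is analytic and hence have uniform codes in $V$; no additional set-theoretic hypothesis is needed.
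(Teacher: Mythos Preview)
The paper does not give its own proof of this lemma; it simply cites it from \cite{LZ}. Your argument is correct and is essentially the standard one found there: push the $E$-pin forward along $f$ to get an $F$-pin, replace it by an $\bar{F}$-equivalent pin on a poset of size $<\kappa(F)$, and then use analytic absoluteness to find a preimage name over the small poset.

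Two minor points worth tightening. First, you implicitly assume $\kappa(F)$ is a cardinal; if no such $\kappa$ exists the inequality is vacuous, so this should be stated. Second, in Step~5 you use that $f$ remains a \emph{reduction} (not merely a homomorphism) in the generic extensions, i.e., that $f(x)\mathrel{F}f(y)\implies x\mathrel{E}y$ persists. This is true, but it is a $\Pi^1_2$ fact and hence requires Shoenfield absoluteness rather than just the $\Sigma^1_1$ absoluteness you invoke elsewhere; your closing sentence gestures at this but it would be cleaner to say so explicitly.
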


For equivalence relations which are classifiable by countable structures, the pinned cardinal can be calculated more easily by the size of ``potential invariants'', as explained below.
\begin{remark}
Ulrich, Rast, and Laskowski \cite{URL17} have independently developed the notion of pinned cardinality in the special case of isomorphism relations.
The presentation below is essentially equivalent to the one in \cite{URL17}.
\end{remark}

Assume that $E$ is a Borel equivalence relation which admits an absolute classifiable $x\mapsto A_x$.
Say that a set $A$ is a \textbf{potential E-invariant} if in some forcing extension there is an $x$ in the domain of $E$ such that $A=A_x$.
If $A$ is a potential invariant for $E$, say that $A$ is \textbf{trivial} if there is an $x$  in the ground model such that $A=A_x$.

\begin{proposition}
There is a one-to-one correspondence between
\begin{itemize}
    \item $E$ pins $\left< \mathbb{P},\tau\right>$, and
    \item potential invariants $A$,
\end{itemize}
such that $\left<\mathbb{P},\tau\right>$ is trivial if and only if $A$ is trivial.
Specifically, a potential invariant $A$ corresponds to $\left<\mathbb{P},\tau\right>$ if and only if $\mathbb{P}\Vdash A_{\tau}=\check{A}$.
\end{proposition}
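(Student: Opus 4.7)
The plan is to construct the assignment in both directions and verify that triviality is preserved, using absoluteness of the classification $x\mapsto A_x$ to translate between $\mathbb{P}$-names for elements of the domain and the invariants they classify. The main technical input in passing from pins to invariants is the standard fact that for mutually generic filters $G\times H\subseteq \mathbb{P}\times\mathbb{Q}$ one has $V[G]\cap V[H]=V$.

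First I would show that every $E$-pin $\langle\mathbb{P},\tau\rangle$ canonically determines a potential invariant $A\in V$ with $\mathbb{P}\Vdash A_{\tau}=\check A$. Fix such a pin and force with $\mathbb{P}\times\mathbb{P}$, obtaining a generic $G\times H$. By the pin condition, $\tau_\ell\mathrel{E}\tau_r$ holds in $V[G\times H]$; since the classification is a complete classification also in the extension, absoluteness makes this equivalent to $A_{\tau_\ell}=A_{\tau_r}$. The left hand side lies in $V[G]$ and the right in $V[H]$, so by mutual genericity the common value lies in $V[G]\cap V[H]=V$. A density argument in $\mathbb{P}$ then yields the desired $A\in V$, and by definition $A$ is a potential invariant, witnessed by $\langle\mathbb{P},\tau\rangle$ itself. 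Conversely, if $A$ is a potential invariant then unpacking the definition produces a poset $\mathbb{P}$ and a $\mathbb{P}$-name $\tau$ with $\mathbb{P}\Vdash A_\tau=\check A$; since $A\in V$, in $V[G\times H]$ we have $A_{\tau_\ell}=A=A_{\tau_r}$, and absoluteness gives $\tau_\ell\mathrel{E}\tau_r$, so $\tau$ is $E$-pinned. This establishes the correspondence in both directions.

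For the triviality clause, $\langle\mathbb{P},\tau\rangle$ is trivial iff there is $x\in X\cap V$ with $\mathbb{P}\Vdash \tau\mathrel{E}\check x$; absoluteness of the classification converts this to $\mathbb{P}\Vdash A_\tau=A_x$, i.e., the associated invariant $A$ equals $A_x$ for some $x\in V$, which is precisely the definition of a trivial invariant. To justify the ``one-to-one'' language I would observe that two pins $\langle\mathbb{P},\sigma\rangle$ and $\langle\mathbb{Q},\tau\rangle$ correspond to the same invariant $A$ if and only if $\mathbb{P}\times\mathbb{Q}\Vdash A_\sigma=\check A=A_\tau$, and by absoluteness this is the same as $\mathbb{P}\times\mathbb{Q}\Vdash \sigma\mathrel{E}\tau$, i.e., the two pins are $\bar E$-equivalent; modulo $\bar E$-equivalence the correspondence is thus a bijection. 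The argument is essentially routine; the only mildly delicate point, which I do not regard as a real obstacle, is that absoluteness must be applied to realizations $\tau_\ell,\tau_r$ living only in a generic extension of $V$, but this is precisely the content of the ``absolute classification'' hypothesis.
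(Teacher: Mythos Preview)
Your proposal is correct and follows essentially the same route as the paper: both directions are handled via absoluteness of the classification together with the mutual-genericity fact $V[G]\cap V[H]=V$, and triviality is translated back and forth in the same way. Your additional remark that the correspondence is a bijection only modulo $\bar E$-equivalence is a useful clarification that the paper leaves implicit; the one place where the paper is slightly more explicit than you is in the invariant-to-pin direction, where it restricts the witnessing poset below a condition forcing $A_\tau=\check A$, but this is exactly what your phrase ``unpacking the definition'' amounts to.
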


Assume first that $\left<\mathbb{P},\tau\right>$ is an $E$-pin. Let $G_l\times G_r$ be $\mathbb{P}\times \mathbb{P}$-generic and let $x_l,x_r$ be the interpretations of $\tau$ according to $G_l$,$G_r$ respectively.
Since $x_l$ and $x_r$ are $E$-related, it follows that $A=A_{x_l}=A_{x_r}$.
Furthermore, this set $A$ is in the intersection $V[G_l]\cap V[G_r]$, which is equal to $V$ by mutual genericity.
If $\left<\mathbb{P},\tau\right>$ is trivial there is $x\in V$ such that $x\mathrel{E}x_l$. In particular, $A_x=A_{x_l}=A$.
Conversely, if there is $x\in V$ with $A=A_x$ then $x$ witnesses that $\left<\mathbb{P},\tau\right>$ is trivial: given any $\mathbb{P}$-generic $G$ over $V$, $A_{\tau[G]}=A=A_x$, so $\tau[G]$ is $E$-related to $x$.

Now let $A$ be a potential invariant for $E$. By assumption there is a poset $\mathbb{Q}$, a generic $G$ and $x\in V[G]$ such that $A=A_x$.
Let $\tau$ be a $\mathbb{Q}$-name such that $\tau[G]=x$. 
Fix a condition $q\in\mathbb{Q}$ such that $q$ forces that $A_{\tau}=A$ and define $\mathbb{P}=\mathbb{Q}\restriction p$.
Now $\left<\mathbb{P},\tau\right>$ is an $E$-pin such that $\mathbb{P}\Vdash A_\tau=A$.


\begin{corollary}
The pinned cardinal of $E$ is $\kappa$ if and only if any potential $E$-invariant is trivial in a generic extension by a poset of size $<\kappa$.
\end{corollary}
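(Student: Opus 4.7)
The plan is to exploit the bijective correspondence between $E$-pins $\langle\mathbb{P},\tau\rangle$ and potential $E$-invariants $A$ established in the preceding proposition, and to show that under this correspondence $\bar{E}$-equivalence classes of pins are in bijection with the potential invariants themselves. Once this dictionary is in place, the corollary reduces to a direct translation of the definition of $\kappa(E)$ (read as: $\kappa(E)\leq\kappa$ iff every potential invariant is trivial in a generic extension by a poset of size $<\kappa$).

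The key lemma I would isolate first is the following: two $E$-pins $\langle\mathbb{P},\sigma\rangle$ and $\langle\mathbb{Q},\tau\rangle$ are $\bar{E}$-equivalent if and only if they correspond to the same potential invariant, i.e., if the unique sets $A,B\in V$ with $\mathbb{P}\Vdash A_{\sigma}=\check{A}$ and $\mathbb{Q}\Vdash A_{\tau}=\check{B}$ satisfy $A=B$. This is a direct mutual-genericity argument, parallel to the one already used in the preceding proposition: if $G_l\times G_r$ is $\mathbb{P}\times\mathbb{Q}$-generic over $V$, then by absoluteness of $x\mapsto A_x$ we have $\sigma[G_l]\mathrel{E}\tau[G_r]$ iff $A_{\sigma[G_l]}=A_{\tau[G_r]}$, and the two sides equal $A$ and $B$ respectively by the forcing theorem.

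For the forward direction of the corollary, suppose $\kappa(E)\leq\kappa$ and let $A$ be a potential $E$-invariant; the proposition gives a pin $\langle\mathbb{P},\sigma\rangle$ with associated invariant $A$, and by hypothesis this pin is $\bar{E}$-equivalent to some $\langle\mathbb{Q},\tau\rangle$ with $|\mathbb{Q}|<\kappa$. The key lemma yields $\mathbb{Q}\Vdash A_\tau=\check{A}$, so for any $\mathbb{Q}$-generic $H$ over $V$ the point $\tau[H]\in V[H]$ witnesses that $A$ is trivial in $V[H]$. Conversely, suppose every potential invariant is trivial in some extension by a poset of size $<\kappa$, and let $\langle\mathbb{P},\sigma\rangle$ be a pin corresponding to $A$. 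By hypothesis there are $\mathbb{Q}$ with $|\mathbb{Q}|<\kappa$, a $\mathbb{Q}$-generic $H$, and $x\in V[H]$ with $A_x=A$; picking a name $\tau$ for $x$ and a condition $q\in\mathbb{Q}$ forcing $A_\tau=\check{A}$, and replacing $\mathbb{Q}$ by $\mathbb{Q}\restriction q$, we obtain a pin with poset of size $<\kappa$ which by the key lemma is $\bar{E}$-equivalent to $\langle\mathbb{P},\sigma\rangle$. The main technical step is the key lemma, but it is no harder than the mutual-genericity calculation already performed in the preceding proposition, so I do not expect substantial obstacles.
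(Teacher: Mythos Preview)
Your proposal is correct and is precisely the argument the paper has in mind: the corollary is stated without proof because it is meant to follow directly from the preceding proposition, and what you have written spells out exactly that deduction, including the point that $\bar{E}$-equivalence of pins corresponds to equality of the associated potential invariants. There are no gaps and nothing to add.
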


\begin{example}
Consider the equivalence relation $=^+$ on $\mathbb{R}^\mathbb{N}$ with the complete classification $\mathbb{R}^\mathbb{N}\ni x\mapsto \set{x(n)}{n\in\mathbb{N}}$.
The potential invariants of $=^+$ are precisely all sets of reals. 
Therefore the pinned cardinal of $=^+$ is $\mathfrak{c}^+$.
\end{example}

To find an unpinned equivalence relation strictly below $=^+$, Zapletal \cites{Zap11} restricted $=^+$ to an invariant subset, in the following way, precisely to limit the size of its potential invariants.

\begin{example}[\cites{Zap11}]\label{example: Zapletal}
Fix Borel functions $f_n\colon\mathbb{R}\to\mathbb{R}$, $n\in\mathbb{N}$, such that the graph $\mathcal{G}$ on $\mathbb{R}$, defined by 
\begin{equation*}
    x\mathrel{\mathcal{G}}y\iff\exists n(f_n(x)=y\vee f_n(y)=x),
\end{equation*}
has cliques of size $\aleph_1$, but no cliques of size greater than $\aleph_1$ (see \cite{Zap11}*{Fact 2.2}).
Let $Z\subseteq\mathbb{R}^\omega$ be the $G_\delta$ set of all injective countable sequences of reals which enumerate a clique in $\mathcal{G}$, and consider the equivalence relation $=^+\restriction Z$ on $Z$.
The potential invariants of $=^+\restriction Z$ are precisely all sets of reals which form a clique in $\mathcal{G}$, and those have cardinality $\leq\aleph_1$. Therefore the pinned cardinal of $=^+\restriction Z$ is $\aleph_1^+$.
\end{example}

\begin{corollary}[\cite{Zap11}]
$=^+$ is not Borel reducible to $=^+\restriction Z$.
\end{corollary}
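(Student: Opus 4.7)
The plan is to derive the corollary from Lemma~\ref{lem;pin-card} together with the pinned cardinal computations recorded in the two examples above. Specifically, I would suppose toward a contradiction that there is a Borel reduction $f\colon \mathbb{R}^{\mathbb{N}}\to Z$ from $=^{+}$ to $=^{+}\!\restriction\! Z$, and then derive the impossible inequality $\mathfrak{c}^{+}\leq\aleph_{1}^{+}$ after moving to a suitable extension.

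First I would record the two inputs. By Example~\ref{example: Zapletal}, every potential $=^{+}\!\restriction\! Z$-invariant is a clique in $\mathcal{G}$, of cardinality at most $\aleph_{1}$; hence it can be realized via a L\'evy-style collapse of size $\leq\aleph_{1}$, giving $\kappa(=^{+}\!\restriction\! Z)=\aleph_{1}^{+}$. On the other hand, every set of reals $A$ is a potential $=^{+}$-invariant, and realizing $A=A_{x}$ for some $x\in\mathbb{R}^{\mathbb{N}}$ requires enumerating $A$ by $\omega$, which in turn requires a poset of size at least $|A|$. Thus in any universe in which a set of reals of cardinality $\geq\aleph_{2}$ exists, the pinned cardinal of $=^{+}$ exceeds $\aleph_{1}^{+}$.

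Second, to arrange such a universe, I would pass to a forcing extension $V[H]$ obtained by adding $\aleph_{2}$ mutually generic Cohen reals over $V$. Inside $V[H]$ the collection $A\subseteq\mathbb{R}$ of these generics has cardinality $\aleph_{2}$ and is a potential $=^{+}$-invariant which cannot be pinned by a poset of cardinality $\aleph_{1}$, so $\kappa(=^{+})^{V[H]}>\aleph_{1}^{+}$. At the same time, the graph $\mathcal{G}$ from Example~\ref{example: Zapletal} is chosen precisely so that the bound ``no clique of size $>\aleph_{1}$" is preserved under further set forcing (this is built into \cite{Zap11}*{Fact 2.2}), so $\kappa(=^{+}\!\restriction\! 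Z)^{V[H]}=\aleph_{1}^{+}$ as well.

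Third, I would invoke absoluteness of Borel reducibility: a single Borel code witnesses the reduction $f$ in every transitive extension, so $=^{+}\leq_{B}\,=^{+}\!\restriction\! Z$ holds in $V[H]$ as well. Applying Lemma~\ref{lem;pin-card} inside $V[H]$ now yields $\kappa(=^{+})^{V[H]}\leq\kappa(=^{+}\!\restriction\! Z)^{V[H]}=\aleph_{1}^{+}$, contradicting the previous paragraph. The only subtle point is ensuring that the cap on clique sizes in $\mathcal{G}$ survives to $V[H]$; that preservation property is exactly the content of \cite{Zap11}*{Fact 2.2}, so once it is cited the rest is a routine combination of Lemma~\ref{lem;pin-card} with the absoluteness of Borel codes.
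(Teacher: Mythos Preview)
Your approach is correct and essentially identical to the paper's: pass to a forcing extension where $\mathfrak{c}>\aleph_1$, use absoluteness of Borel reducibility, and compare pinned cardinals via Lemma~\ref{lem;pin-card}. The paper's proof is just the terse two-sentence version of what you wrote; your only additions are naming a specific forcing (adding $\aleph_2$ Cohen reals) and spelling out why the clique bound on $\mathcal{G}$ persists, both of which the paper leaves implicit.
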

\begin{proof}
By the absoluteness of Borel reducibility, we may work in some forcing extension where the continuum hypothesis fails, that is, $\mathfrak{c}>\aleph_1$.
In this model the pinned cardinal of $=^+$ is strictly greater than that of $=^+\restriction Z$, so the corollary follows from Lemma~\ref{lem;pin-card}.
\end{proof}

\begin{example}
By Example~\ref{Ex:3},
a potential invariant for $E_{\mathrm{inj}}(\mathrm{Aut}(\mathcal{N}^{\mathrm{BKL}}_{n+1}))$ is a set of reals together with a $\mathrm{BKL}_{n+1}$-structure on it. Since $\mathrm{BKL}_{n+1}$ models are of size at most $\aleph_n$, and there is a model of size $\aleph_n$, it follows that the pinned cardinal of $E_{\mathrm{inj}}(\mathrm{Aut}(\mathcal{N}^{\mathrm{BKL}}_{n+1}))$ is $\aleph_{n}^+$, in a model where $|\mathbb{R}|\geq\aleph_n$.
It follows from Lemma~\ref{lem;pin-card} that $E_{\mathrm{inj}}(\mathrm{Aut}(\mathcal{N}^{\mathrm{BKL}}_{n+1}))$ is not Borel reducible to $E_{\mathrm{inj}}(\mathrm{Aut}(\mathcal{N}^{\mathrm{BKL}}_{n}))$.
\end{example}
Both $E_{\mathrm{inj}}(\mathrm{Aut}(\mathcal{N}^{\mathrm{BKL}}_{2}))$ and $=^+\restriction Z$ have the same pinned cardinal $\aleph_1^+$. Using Theorem~\ref{thm: main theorem} we are able to separate them.

\begin{proof}[Proof of Theorem \ref{thm;BKL2-vs-F}]
Consider a language with countably many function symbols $h_i$ and the theory $\sigma$ asserting that for any $x$ and $y$ there is some $i$ such that either $h_i(x)=y$ or $h_i(y)=x$. Notice that every structure in $\mathrm{Str}(\sigma,\mathbb{N})$ has algebraic dimension less than or equal to $1$. By Theorem \ref{thm: main theorem} and the observation in   Example \ref{Ex:2} that  $\mathrm{Aut}(\mathcal{N}_2^{\mathrm{BKL}})$ is $2$-free we have that  $E\big( \mathrm{Aut}(\mathcal{N}^{\mathrm{BKL}}_2)\big)$ is generically ergodic with respect to   $\big(\mathrm{Str}^{\mathrm{inj}}_{\mathbb{R}}(\sigma,\mathbb{N}),
\simeq_{\mathrm{iso}} \big)$.
Therefore it suffices to show that $=^+\restriction Z$ is Borel reducible to $\big(\mathrm{Str}^{\mathrm{inj}}_{\mathbb{R}}(\sigma,\mathbb{N}),
\simeq_{\mathrm{iso}} \big)$.
This is done by sending $x\in Z$ to the pair $(x,\mathcal{N})$ where $\mathcal{N}$ is the structure on $\mathbb{N}$ defined by $h_i(n)=m\iff f_i(x(n))=x(m)$ (where $f_i$ are the functions from Example~\ref{example: Zapletal}).
\end{proof}

\begin{bibdiv}
\begin{biblist}[\normalsize]

\bib{AP20}{article}
{
    author = {Allison, S.},
        author = {Panagiotopoulos, A.},
    title = {Dynamical obstructions to classification by (co)homology and other TSI-group invariants},
    journal = {Trans. Amer. Math. Soc., to appear},
    year = {2021}
}

\bib{BFKL15}{article}
{
author = {Baldwin, J.T.},
author = {Friedman, S.},
author = {Koerwien, M.},
author = {Laskowski, M.C.},
title = {Three red herrings around Vaught’s conjecture},
journal = {Trans. Amer. Math. Soc.},
year = {2015},
pages = {3673--3694},
volume = {368}
}

\bib{BKL17}{article}
{
author = {Baldwin, J.T.},
author = {Koerwien, M.},
author = {Laskowski, M.C.},
title = {Disjoint amalgamation in locally finite AEC},
journal = {J. Symb. Logic},
year = {2017},
pages = {98--119},
volume = {82},
number = {1}
}

\bib{BK96}{book}
{
author = {Becker, H.},
author = {Kechris, A.S.},
title = {The descriptive set theory of Polish group actions},
series = {Lecture Note Series},
volume = {232},
publisher = {London Mathematical Society},
place ={Cambridge},
year = {1996},
}

\bib{Mel}{article}
{
author = {Bilge, D.},
author = {Melleray, J.},
title = {Elements of finite order in automorphism groups of homogeneous structures},
journal = {Contributions to Discrete Mathematics},
year = {2013},
pages = {88--119},
volume = {8}
}

\bib{Bla81}{article}
{
author = {Blass, A.},
title = {The Model of Set Theory Generated by Countably Many Generic Reals},
journal = {J. Symb. Logic},
year = {1981},
pages = {732--752},
volume = {46},
number = {4}
}

\bib{CC20}{article}
{
author = {Clements, J.D.},
author = {Coskey, S.},
title = {New jump operators on equivalence relations},
journal = { arXiv:2008.06613},
year = {2020},
}

\bib{Random}{article}
{
author = {Darji, U.},
author = {Elekes, M.},
author = {Kalina, K.},
author = {Kiss, V.},
author = {Vidny\'{a}nszky, Z.},
title = {The structure of random automorphisms of countable structures},
journal = {Trans. Amer. Math. Soc.},
year = {2019},
pages = {8829–-8848},
volume = {371},
number = {12}
}

\bib{Fri00}{article}
{
author = {Friedman, H.M.},
title = {Borel and Baire reducibility},
journal = {Fund. Math.},
year = {2000},
pages = {61--69},
volume = {164},
number = {1}
}

\bib{Gao09}{book}
{
author = {Gao, S.},
title = {Invariant descriptive set theory},
series = {Pure and Applied Mathematics},
year = {2008},
publisher = {CRC Press}
}

\bib{HL64}{misc}
{
author = {Halpern, J. D.},
author = {Levy, A.},
title = {The Boolean prime ideal theorem does not imply the axiom of choice},
series = {Axiomatic Set Theory (Proc. Sympos. Pure Math., Vol. XIII, Part I, Univ. California, Los Angeles, Calif., 1967)},
publisher = {Amer. Math. Soc.},
place = {Providence, R.I.}, 
year = {1971},
note = {83–-134}
}

\bib{Hjo00}{book}
{
author = {Hjorth, G.},
title = {Classification and orbit equivalence relations},
series = {Mathematical Surveys and Monographs},
publisher = {Amer. Math. Soc.},
place = {Providence, R.I.}, 
year = {2000},
volume = {75}
}

\bib{Hodges}{book}
{
author = {Hodges, W.},
title = {Model theory},
series = {Encyclopedia of Mathematics and its Applications},
publisher = {Cambridge University Press},
year = {1993},
volume = {42}
}

\bib{Jec03}{book}
{
author = {Jech, T.},
title = {Set theory. The third millennium edition, revised and expanded},
series = {Springer Monographs in Mathematics},
publisher = {Springer-Verlag},
place = {Berlin}, 
year = {2003}
}

\bib{Ka08}{article}
{
author = {Kanamori, A.},
title = {Cohen and set theory},
journal = {Bull. Symb. Logic},
year = {2008},
pages = {351--378},
volume = {14},
number = {3}
}

\bib{Kan08}{book}
{
author = {Kanovei, V.},
title = {Borel equivalence relations},
publisher = { Amer. Math. Soc.},
year = {2008},
series = {University Lecture Series},
volume = {44}
}

\bib{Kec92}{article}
{
author = {Kechris, A.S.},
title = {Countable sections for locally compact group actions},
journal = {Ergod. Theory Dynam. Systems},
year = {1992},
pages = {283–295},
volume = {12},
}

\bib{Kec95}{book}
{
author = {Kechris, A.S.},
title = {Classical Descriptive Set Theory},
publisher = {Springer-Verlag},
year = {1995},
series = {Graduate Texts in Mathematics},
volume = {156}
}

\bib{Kec18}{book}
{
author = {Kechris, A.S.},
title = {The theory of countable Borel equivalence relations},
year = {2018},
series = {preprint}
}

\bib{KP}{article}
{
    author = {Kruckman, A.},
        author = {Panagiotopoulos, A.},
    title = {Higher dimensional obstructions for star reductions},
    journal = {Fund. Math., to appear},
    year = {2021}
}

\bib{LZ}{book}
{
author = {Larson, P.B.},
author = {Zapletal, J.},
title = {Geometric set theory},
publisher = {Amer. Math. Soc.},
year = {2021},
series = {Surveys and Monographs, to appear},
}

\bib{LP}{article}
{
author = {Lupini, M.},
author = {Panagiotopoulos, A.},
title = {Games orbits play and obstructions to Borel reducibility},
journal = {Groups, Geometry, and Dynamics},
year = {2018},
pages = {1461--1483},
volume = {12},
number={4}
}

\bib{KMPZ}{article}
{
author = {Kechris, A.S.},
author = {Malicki, M.},
author = {Panagiotopoulos, A.},
author = {Zielinski, J.},
title = {On Polish groups admitting non-essentially countable actions},
journal = {Ergod. Theory Dynam. Systems},
year = {2020},
pages = {1--15},
doi = {10.1017/etds.2020.133}
}

\bib{neumann1976structure}{article}
{
author = {Neumann, P.M.},
title = {The structure of finitary permutation groups},
journal = {Archiv der Mathematik},
year = {1976},
pages = {3--17},
volume = {27},
number = {1}
}

\bib{Sha18}{article}
{
author = {Shani, A.},
title = {Borel reducibility and symmetric models},
journal = {Trans. Amer. Math. Soc.},
year = {2021},
pages = {453--485},
volume = {374},
number = {1}
}

\bib{Sha19}{article}
{
author = {Shani, A.},
title = {Strong ergodicity around countable products of countable equivalence relations},
journal = {arXiv 1910.08188},
year = {2019}
}

\bib{TZ12}{book}
{
author = {Tent, T.},
author = {Ziegler, M.},
title = {A course in model theory},
publisher = {Cambridge University Press},
year = {2012},
series = {Lecture Notes in Logic},
volume = {156}
}

\bib{URL17}{article}
{
author = {Ulrich, D.},
author = {Rast, R.},
author = {Laskowski, M.C.},
title = {Borel complexity and potential canonical Scott sentences},
journal = {Fund. Math.},
year = {2017},
pages = {101--147},
volume = {239},
number = {2}
}

\bib{Zap11}{article}
{
author = {Zapletal, J.},
title = {Pinned equivalence relations},
journal = {Math. Research Lett.},
year = {2011},
pages = {1149--1156},
volume = {18}
}

\end{biblist}
\end{bibdiv}
\end{document}